\theoremstyle{plain}
\theoremstyle{definition} 
\newtheorem{thm}{Theorem}[section]
\newtheorem{lem}[thm]{Lemma}
\theoremstyle{definition}
\newtheorem{defn}{Definition}[section]
\theoremstyle{remark}
\newtheorem{rem}{\bf Remark}[section]
\newcommand{\be}{\begin{equation}}
\newcommand{\ee}{\end{equation}}
\newcommand{\bea}{\begin{eqnarray}}
\newcommand{\eea}{\end{eqnarray}}
\newcommand{\ben}{\begin{eqnarray*}}
	\newcommand{\een}{\end{eqnarray*}}
\newcommand{\bt}{\begin{split}}
	\newcommand{\et}{\end{split}}
\newcommand{\bet}{\begin{equation}}
\newcommand{\mc}{\mathbb{C}}
\newcommand{\mr}{\mathbb{R}}
\newcommand{\ra}{\rightarrow}
\newcommand{\beq}{\begin{equation*}}
\newcommand{\eeq}{\end{equation*}}
\newcommand{\bal}{\begin{aligned}}
\newcommand{\eal}{\end{aligned}}
\newcommand{\dbar}{\bar{\partial}}
\newcommand{\pa}{\partial}
\newcommand{\bp}{\bar{\partial}}
\newcommand{\calO}{{\mathcal{O}}}%
\newcommand{\Ker}{{\textup{Ker}}}
\newcommand{\Dom}{{\textup{Dom}}}
\renewcommand{\leq}{\leqslant}%
\renewcommand{\geq}{\geqslant}%
\newcommand{\inner}[1]{\langle#1\rangle}
\newcommand{\iinner}[1]{\langle\langle#1\rangle\rangle}
\g@addto@macro{\endabstract}{\@setabstract}
\newcommand{\authorfootnotes}{\renewcommand\thefootnote{\@fnsymbol\c@footnote}}%
\newcommand{\Levi}{{\textup{Levi}}}
\newcommand{\Hess}{{\textup{Hess}}}
\begin{document}

\title{The union problem for domains with partial pseudoconvex boundaries}
\date{}

\author[J. Hu]{Jinjin Hu}
\address{Jinjin Hu: Yau Mathematical Sciences Center, Tsinghua University, Beijing 100084, China.}
\email{hujinjin@mail.tsinghua.edu.cn}

\author[X.Zhang]{Xujun Zhang}
\address{Xujun Zhang: Guangxi Center for Mathematical Research, Guangxi University, Nanning, Guangxi, 530004, P. R. China}
\address{Guangxi Base, Tianyuan Mathematical Center in Southwest China,  Nanning, Guangxi, 530004, P. R. China}
\email{xujunzhang@amss.ac.cn}

\subjclass[2020]{32F17,32C55}

\keywords{hyper-$q$-convex, $q$-convex}

\maketitle

\begin{abstract}
We show that a smooth bounded domain in $\mc^n$ admitting partial pseudoconvex exhaustion remains partial pseudoconvex.
The main ingredient of the proof is based on a new characterization of hyper-$q$-convex domains.
Furthermore,we get several convex analogies in $\mr^n$.    
\end{abstract}

\section{Introduction}
One of the classical problems in complex geometry is the following problem called the union problem: for $n\geq 2$,
if a complex manifold $X$ admits a sequence exhaustion $\{X_{k} \}^{\infty}_{k=1}$ such that $X=\bigcup_{n=1}^\infty X_n$, where $X_k \subset \subset X_{k+1}$,
the union problem asks whether $X$ can be described in terms of its exhausting manifolds.

The Behnke-Stein's Theorem shows that
the union of domains of holomorphy is a domain of holomorphy (\cite{BT1}). 
Due to the solution of Levi's problem,
one can prove that if a domain in $\mc^{n}$ admits a pseudoconvex exhaustion is still a pseudoconvex domain (\cite{Hor65}). 
Recently, inspired by the $L^{2}$ converse teichnique in \cite{DNW21},\cite{DNWZ22},\cite{Deng-Zhang},
Liu-Zhang showed that a domain with null thin complement $(\mathring{\overline{D}}=D)$ admits a complete K\"ahler exhaustion is still a complete K\"ahler domain (\cite{Liu-Zhang}).

In this note, we study the union problem for the domain with partial convex boundary. 
The first main result of this note is the following.
\begin{thm}\label{thm:main-1}
	Let $D$ be a bounded domain in $\mc^n$ with a smooth boundary. 
	Let $\{D_j\}$ be a sequence of open subsets of $D$ with $D_j\subset D_{j+1} $ and $\bigcup_jD_j=D$. 
If each $D_j$ is hyper-$q$-convex, 
	then $D$ is hyper-$q$-convex.
\end{thm}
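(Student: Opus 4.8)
The plan is to exhibit a bounded $q$-plurisubharmonic exhaustion of $D$, built from the boundary distance functions of the approximating sets $D_j$, with the characterization of hyper-$q$-convexity supplying the two nontrivial links in the chain. For an open set $E\subset\mc^n$ write $\delta_E(z)=\mathrm{dist}(z,\partial E)$. Since $D_j\subset D_{j+1}\subset D$ we have $\delta_{D_j}\leq\delta_{D_{j+1}}\leq\delta_D$ on $D_j$, and a routine compactness argument — using that $\overline{D_j}\subset\overline D$ is bounded and that $\bigcap_j(\mc^n\setminus D_j)=\mc^n\setminus D$ — gives $\delta_{D_j}(z)\nearrow\delta_D(z)$ for every $z\in D$, so that $-\log\delta_{D_j}\searrow-\log\delta_D$, the convergence being monotone on each compact subset of $D$ (on which the $D_j$ eventually serve as the common ambient domains of the functions).

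First I would record the elementary half of the characterization: each $D_j$, being hyper-$q$-convex, is a fortiori $q$-pseudoconvex, hence $-\log\delta_{D_j}$ is $q$-plurisubharmonic on $D_j$, and this implication requires no regularity of $\partial D_j$, exactly as in the case $q=1$, where a bounded plurisubharmonic exhaustion already forces $D_j$ to be pseudoconvex and thus $-\log\delta_{D_j}$ plurisubharmonic. Next I would pass to the limit, using that $q$-plurisubharmonicity is governed by a differential subequation stable under decreasing limits of subsolutions (and under taking maxima): the decreasing limit $-\log\delta_D$ is therefore $q$-plurisubharmonic on all of $D$. Finally I would invoke the substantial half of the characterization: $D$ is bounded with \emph{smooth} boundary and $-\log\delta_D$ is $q$-plurisubharmonic near $\partial D$, and these two facts force $D$ to be hyper-$q$-convex — this is the $q$-analogue of the Diederich--Fornaess construction of a bounded plurisubharmonic exhaustion from a smoothly bounded pseudoconvex domain, and it is the only place where the smoothness of $\partial D$ is used. (Equivalently, the first and last steps may be phrased through the Levi form of a defining function of $D$ restricted to the complex tangent directions of $\partial D$.) Chaining the three steps proves the theorem.

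The monotone-convergence claim of the opening paragraph and the passage to the limit are routine, and I would treat them briefly. The real weight of the argument is carried by the characterization theorem used in the first and last steps; within the present proof, the delicate point is the limit step, because for the Andreotti--Grauert notion of $q$-convexity (Levi form with at least $n-q+1$ positive eigenvalues) the relevant class is \emph{not} closed under decreasing limits, so it is essential that the characterization be formulated via the potential-theoretic notion of $q$-plurisubharmonicity, which \emph{is} closed under decreasing limits, together with the fact that on a smoothly bounded domain this notion agrees with the others for the particular function $-\log\delta_D$. A secondary, purely bookkeeping, subtlety is the behaviour near $\partial D$: since the $-\log\delta_{D_j}$ exhaust the $D_j$, which have no common boundary, the first two steps should be stated as assertions about $-\log\delta_D$ on all of $D$ — legitimate because every compact subset of $D$ eventually lies in some $D_j$ — with only the last step referring to a neighbourhood of $\partial D$.
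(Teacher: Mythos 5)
Your route is genuinely different from the paper's (the paper solves $\bar\partial u_j=f$ on each $D_j$ by the H\"ormander--Ji--Tan--Yu $L^2$ existence theorem, Theorem \ref{thml2existence}, passes to a weak limit, and then applies its new characterization, Theorem \ref{thm:main-2}), but as written your argument has a genuine gap at its first, load-bearing step. You assert, as ``the elementary half'' and as needing no regularity of $\partial D_j$, that hyper-$q$-convexity of $D_j$ forces $-\log\delta_{D_j}$ to be $q$-plurisubharmonic on $D_j$. For $q=1$ this is Oka's lemma --- already not elementary --- and for $2\le q\le n-1$ the statement you actually need is \emph{global}: in the decreasing-limit step you must know that $-\log\delta_{D_j}$ is $q$-plurisubharmonic on a fixed ball around a fixed point $z_0$ with $\delta_D(z_0)=\epsilon>0$, i.e.\ at points whose distance to $\partial D_j$ stays bounded below by roughly $\epsilon$ for all large $j$. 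What the Levi-problem literature for this notion provides (\cite{Harvey-Lawson-13}, and the S{\l}odkowski--Dieu theory of $q$-pseudoconvexity) is $q$-plurisubharmonicity of $-\log\delta_{D_j}$ only in a collar of $\partial D_j$, of width depending on the geometry of $\partial D_j$ and hence not uniform in $j$; that is useless for the limit step. The Oka-type propagation from ``near the boundary'' to ``all of the domain'' uses, for $q=1$, the continuity principle with analytic discs; for $q\ge2$ the analogous propagation would require a continuity principle with $q$-dimensional analytic families, which you neither state nor prove and which is not available in the form you need --- this is precisely why $q$-pseudoconvexity is usually \emph{defined} through the behaviour of $-\log\delta$ near the boundary only. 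The parenthetical claim that the implication ``requires no regularity of $\partial D_j$'' is also unfounded: hyper-$q$-convexity is a condition on the Levi form of a smooth boundary, so without regularity the hypothesis on $D_j$ has no meaning within your scheme.

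Two secondary points: the two halves are labelled backwards --- deducing the boundary condition of $D$ from $q$-plurisubharmonicity of $-\log\delta_D$ near the smooth boundary is the comparatively easy limiting computation, whereas the direction you call elementary is the hard, Levi-problem direction --- and Diederich--Fornaess goes the opposite way from the one you invoke it for. The monotone convergence $\delta_{D_j}\nearrow\delta_D$ and the stability of $q$-plurisubharmonicity under decreasing limits are indeed routine and fine. If you want to pursue this potential-theoretic route (it is close in spirit to Harvey--Lawson's treatment of the real case \cite{Harvey-Lawson-12}, which the paper itself compares with Theorem \ref{thm:main-4}), the missing ingredient you must supply is exactly the global, or at least uniform-collar, $q$-plurisubharmonicity of $-\log\delta_{D_j}$; the paper's $L^2$ argument is built to avoid this issue altogether.
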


The smooth domain $D$ is said to be hyper-$q$-convex if the sum of any \( q \) eigenvalues of the Levi form on the complex tangent space of $\partial D$ is non-negative.
Grauert and Riemenschneider introduced the related concept in \cite{Grauert-Riemenschneider}.
The $\dbar$-problem on hyper-$q$-convex domain in $\mc^n$ was studied in \cite{Ho-1991} and \cite{Ji-Tan-Yu-2015}.
We prove Theorem \ref{thm:main-1} by standard functional technique and the following characterization of the hyper-$q$-convex domain.

\begin{thm}\label{thm:main-2}
	Let $D$ be a bounded domain in $\mc^n$ with smooth boundary, $ 1\leq q \leq n-1$.
Suppose that for any strictly plurisubharmonic function $\varphi$ of the form: 
	$$
	\varphi=a\|z-z_0\|^2-b, \forall a, b > 0, \forall z_0 \in \mc^n,
	$$ 
	and any $\bar\partial$-closed form $f\in \wedge^{0,q}(\bar{D}) \cap \text{Dom}(\bar\partial^*_{\varphi})$,
	the equation $\bar\partial u=f$ is solvable on $D$ with the estimate
		$$
		\int_{D} |u|^2 e^{-\varphi} d\lambda    \leq \int_{D} \inner{\textup{Levi}_{\varphi}^{-1}f,f } e^{-\varphi }d\lambda    =\frac{1}{aq}\int_D|f|^2e^{-\varphi } d\lambda,
		$$
then $D$ is hyper-$q$-convex.
\end{thm}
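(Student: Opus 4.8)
\emph{Plan.} I would prove the contrapositive: if $D$ is \emph{not} hyper-$q$-convex, I will produce, for a weight $\varphi$ of the prescribed form, a $\bar\partial$-closed $f\in\wedge^{0,q}(\bar D)\cap\Dom(\bar\partial^*_\varphi)$ for which the asserted estimate fails. The first step is functional-analytic. Fix an admissible $\varphi=a\|z-z_0\|^2-b$ and a $\bar\partial$-closed $f\in\wedge^{0,q}(\bar D)\cap\Dom(\bar\partial^*_\varphi)$, and let $u$ solve $\bar\partial u=f$ with $\|u\|_\varphi^2\le\int_D\inner{\Levi_\varphi^{-1}f,f}e^{-\varphi}d\lambda=\frac1{aq}\|f\|_\varphi^2$. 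For every $\alpha\in\Dom(\bar\partial^*_\varphi)$ one has $\inner{f,\alpha}_\varphi=\inner{\bar\partial u,\alpha}_\varphi=\inner{u,\bar\partial^*_\varphi\alpha}_\varphi$, whence $|\inner{f,\alpha}_\varphi|\le(aq)^{-1/2}\|f\|_\varphi\|\bar\partial^*_\varphi\alpha\|_\varphi$; taking $\alpha=f$, which is legitimate because $f\in\Dom(\bar\partial^*_\varphi)$, gives
\beq
aq\,\|f\|_\varphi^2\le\|\bar\partial^*_\varphi f\|_\varphi^2 .
\eeq

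\emph{Step: the Morrey--Kohn--H\"ormander identity.} I then feed this into the Morrey--Kohn--H\"ormander identity on $D$. The point of restricting to weights $a\|z-z_0\|^2-b$ is that then $\varphi_{j\bar k}\equiv a\delta_{jk}$, so the curvature term equals \emph{exactly} $aq\|f\|_\varphi^2$ (this is precisely $\inner{\Levi_\varphi^{-1}f,f}=\frac1{aq}|f|^2$). Using $\bar\partial f=0$, and writing $\rho$ for a defining function normalized by $|d\rho|=1$ on $\partial D$ and $dS$ for the induced surface measure, the identity reads
\begin{multline*}
\|\bar\partial^*_\varphi f\|_\varphi^2=aq\,\|f\|_\varphi^2+\sum_{|J|=q}\sum_k\int_D\Bigl|\frac{\partial f_J}{\partial\bar z_k}\Bigr|^2e^{-\varphi}\,d\lambda\\
+\int_{\partial D}\sum_{|K|=q-1}\sum_{j,k}\frac{\partial^2\rho}{\partial z_j\partial\bar z_k}\,f_{jK}\overline{f_{kK}}\;e^{-\varphi}\,dS ,
\end{multline*}
so, combining with the previous inequality, for \emph{every} admissible $\varphi$ and every $\bar\partial$-closed $f\in\wedge^{0,q}(\bar D)\cap\Dom(\bar\partial^*_\varphi)$,
\begin{multline}\label{eq:keyineq}
-\int_{\partial D}\sum_{|K|=q-1}\sum_{j,k}\frac{\partial^2\rho}{\partial z_j\partial\bar z_k}\,f_{jK}\overline{f_{kK}}\;e^{-\varphi}\,dS\\
\le\ \sum_{|J|=q}\sum_k\int_D\Bigl|\frac{\partial f_J}{\partial\bar z_k}\Bigr|^2e^{-\varphi}\,d\lambda .
\end{multline}
Finally I recall the pointwise fact: if $f$ is tangential on $\partial D$, then diagonalizing the Levi form of $\partial D$ at a boundary point with eigenvalues $\mu_1,\dots,\mu_{n-1}$ turns the boundary integrand into $\sum_{|J|=q}\bigl(\sum_{i\in J}\mu_i\bigr)|f_J|^2$; thus $D$ is hyper-$q$-convex exactly when this is $\ge0$ for all tangential $(0,q)$-forms, and its failure yields a point $p\in\partial D$ together with a tangential $(0,q)$-covector $\omega$ at $p$ with $\sum_{|K|=q-1}\sum_{j,k}\rho_{j\bar k}(p)\,\omega_{jK}\overline{\omega_{kK}}<0$.

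\emph{Step: concentration.} Granting (see below) a global $\bar\partial$-closed $f\in\wedge^{0,q}(\bar D)\cap\Dom(\bar\partial^*_\varphi)$ with $f(p)=\omega$, I test \eqref{eq:keyineq} with $\varphi=a\|z-p\|^2-b$ and let $a\to+\infty$ (the constant $b$ merely rescales both sides by $e^b$). Since $\partial D$ is $(2n-1)$-dimensional while $D$ is $2n$-dimensional, a Laplace (Gaussian) asymptotic gives
\begin{align*}
-\int_{\partial D}\sum_{|K|=q-1}\sum_{j,k}\frac{\partial^2\rho}{\partial z_j\partial\bar z_k}\,f_{jK}\overline{f_{kK}}\;e^{-\varphi}\,dS&\sim c_1\,a^{-(2n-1)/2},\\
\sum_{|J|=q}\sum_k\int_D\Bigl|\frac{\partial f_J}{\partial\bar z_k}\Bigr|^2e^{-\varphi}\,d\lambda&=O\bigl(a^{-n}\bigr),
\end{align*}
with $c_1>0$, because near $p$ the integrand $-\sum\rho_{j\bar k}f_{jK}\overline{f_{kK}}$ is continuous and strictly positive at $p$ while $e^{-a\|z-p\|^2}$ concentrates there (the contribution of $\partial D$ away from $p$ being exponentially small). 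Since $a^{-(2n-1)/2}$ is of strictly larger order than $a^{-n}=a^{-(2n)/2}$, this contradicts \eqref{eq:keyineq} for $a$ large, proving the theorem.

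\emph{The main obstacle: building the test form.} Everything above is the standard weighted $L^2$/Morrey--Kohn machinery plus an elementary Laplace estimate; the real difficulty is producing a \emph{global} $f\in\wedge^{0,q}(\bar D)$ with $\bar\partial f=0$, tangential on $\partial D$ (so that $f\in\Dom(\bar\partial^*_\varphi)$ for all the weights simultaneously, the boundary condition being weight-independent), and $f(p)=\omega$. Here ``partial'' pseudoconvexity bites: since $\bar\partial$ need not be globally solvable on $D$, one cannot merely extend a tangential section from $\partial D$ and project it onto $\ker\bar\partial$. I would argue locally-to-globally. On a small Stein neighborhood $U$ of $p$, after a biholomorphic change of coordinates normalizing $\rho$ near $p$, produce holomorphic functions $w_1,\dots,w_q$ on $U$ whose differentials are complex-tangent to $\partial D$ along $\partial D\cap U$ (one can start from $w_i=z_ie^{-2\mu_iz_n}$ in suitable coordinates and correct by a local solve so that the $d\bar w_i$ become exactly tangential), normalized so that $d\bar w_1\wedge\dots\wedge d\bar w_q=\omega$ at $p$; then $f^{\mathrm{loc}}:=d\bar w_1\wedge\dots\wedge d\bar w_q$ is $\bar\partial$-closed, tangential on $\partial D\cap U$, and equals $\omega$ at $p$. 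Multiplying by a cutoff $\chi$ with $\chi\equiv1$ near $p$ and $\supp\chi\subset U$, the form $\chi f^{\mathrm{loc}}$ is tangential on all of $\partial D$ but carries the error $\bar\partial(\chi f^{\mathrm{loc}})=\bar\partial\chi\wedge f^{\mathrm{loc}}$, a $\bar\partial$-closed $(0,q+1)$-form supported in $\supp\bar\partial\chi\subset\subset U\setminus\{p\}$. One then solves $\bar\partial v=\bar\partial\chi\wedge f^{\mathrm{loc}}$ on $U$ with $v$ vanishing near $p$ (routine, since the data vanish there and $U$ is Stein) and --- the delicate requirement --- with $v$ tangential on $\partial D\cap U$; setting $f:=\chi f^{\mathrm{loc}}-v$ then gives the desired global form. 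Producing this last correction, i.e. solving the localized $\bar\partial$-equation while preserving the tangential boundary condition, is the one genuinely delicate point of the argument, and is where I expect the main technical effort to lie.
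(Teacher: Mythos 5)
Your analytic core coincides with the paper's: testing the estimate with $\alpha=f$ (equivalently the paper's $\alpha=\Levi_\varphi^{-1}f$, a scalar multiple) gives $aq\|f\|_\varphi^2\leq\|\bar\partial_\varphi^*f\|_\varphi^2$, the Morrey--Kohn--H\"ormander identity then yields $-\int_{\partial D}\inner{\Levi_\rho f,f}e^{-\varphi}dV\leq\|\overline\nabla f\|_\varphi^2$ for every admissible weight, and the Gaussian concentration $\varphi=a\|z-p\|^2-b$, $a\to+\infty$, exploiting the dimension gap between $\partial D$ ($2n-1$) and $D$ ($2n$), produces the contradiction exactly as in the paper (the paper packages the asymptotics via the Morse lemma and a radial comparison rather than a Laplace expansion, but this is cosmetic). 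However, there is a genuine gap at the step you yourself flag as unresolved: the construction of a \emph{global} $\bar\partial$-closed $f\in\wedge^{0,q}(\bar D)\cap\Dom(\bar\partial_\varphi^*)$ with $f(p)=\omega$. Your plan --- local holomorphic functions $w_i$ with tangential $d\bar w_i$, a cutoff, and then a corrective solve of $\bar\partial v=\bar\partial\chi\wedge f^{\mathrm{loc}}$ on a Stein neighborhood with $v$ vanishing near $p$ \emph{and} tangential along $\partial D\cap U$ --- is not carried out, and the corrective solve is precisely a $\bar\partial$-problem with a Neumann-type boundary condition on a domain that is only ``partially'' pseudoconvex near $p$; nothing you invoke makes it ``routine,'' so as written the proof is incomplete at its decisive point.

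The missing ingredient is in fact much easier than your sketch suggests, and this is how the paper proceeds (Lemma 3.1, quoted from Deng--Zhang): for any tangential $(0,1)$-covector $\xi$ at $p_0$ there is a \emph{smooth} (not holomorphic) function $h$ on $\mc^n$ with $\sum_i\frac{\partial\rho}{\partial z_i}\frac{\partial h}{\partial\bar z_i}=0$ and $\bar\partial h(p_0)=\xi$. Taking such $h_1,\dots,h_q$ with $\bar\partial h_i(p_0)=d\bar z_i$ in the adapted coordinates, the form $f=\bar\partial h_1\wedge\cdots\wedge\bar\partial h_q$ is automatically $\bar\partial$-closed (each factor is $\bar\partial$-exact), lies in $\Dom(\bar\partial_\varphi^*)$ for every weight simultaneously because the boundary condition $T_\rho f=0$ is weight-independent and stable under wedging tangential factors, and equals $\omega$ at $p_0$. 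No Stein neighborhood, no holomorphy of the local data, and no boundary-condition-preserving $\bar\partial$-solve are needed; the global solvability of $\bar\partial$ on $D$, which you correctly note is unavailable, never enters. Replacing your construction step by this lemma (or by supplying a proof of it, as the paper does in the real case by extending boundary data constantly along normals and cutting off) closes the gap and turns your argument into essentially the paper's proof.
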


We prove Theorem \ref{thm:main-2} by showing that the solvability of the $\dbar$ equation indicates a special version of Bochner-type inequality with boundary term.
We employ a choice of weight functions to derive a contradiction, following the localization technique used in the work of Deng, Ning, Wang, and Zhou (\cite{DNW21},\cite{DNWZ22},\cite{Deng-Zhang}).

\begin{rem}
	\begin{enumerate}
		\item It's well known that the solvablity of the $q$-th $\dbar$ equation indicates that $H^{q}(D,\calO)=0$.
		One can show that for a hyper-$q$-convex domain $D$,
		$H^{k}(D,\calO)=0$ for any $k \geq q$ (\cite{Hor65}, \cite{Ji-Tan-Yu-2015}).
		But due to Laufer's algorithm (\cite{Laufer}), 
		$H^{q}(D,\calO)=0$ does not imply $H^{k}(D,\calO)=0$ for any $k\geq q$,
therefore the estimate for the solution in Theorem \ref{thm:main-2} is essential.
				\item 	When $q=1$,
		the boundary regularity of Theorem \ref{thm:main-2} can be weakened to the null thin complement $(\mathring{\overline{D}}=D)$ due to the division theorem on domain $D$ by the recent work in \cite{Liu-Zhang}.
		However, the geometric interpretation of the division theorem for holomorphic $(0, q)$-forms remains unclear to the authors.
		Therefore it's unknown whether the boundary regularity of Theorem \ref{thm:main-2} can be weakened for $q \geq 2$.
	\end{enumerate}
\end{rem}

Similar to \cite{Deng-Zhang},
we get the convex analogy for Theorem \ref{thm:main-1} and Theorem \ref{thm:main-2}.

\begin{thm}\label{thm:main-3}
	Let $D$ be a bounded domain in $\mr^n$ with smooth boundary, $ 1\leq q \leq n-1$.
Suppose that for any strictly convex function $\varphi$ of the form: 
	$$
	\varphi=a\|x-x_0\|^2-b, \forall a, b > 0, \forall x_0 \in \mr^n,
	$$ 
	and any $d$-closed form $f\in \wedge^{q}(\bar{D}) \cap \text{Dom}(d^*_{\varphi})$,
	the equation $d u=f$ is solvable on $D$ with the estimate
		$$
		\int_{D} |u|^2 e^{-\varphi} d\lambda    \leq \int_{D} \inner{\Hess_{\varphi}^{-1}f,f } e^{-\varphi }  d\lambda  =\frac{1}{2aq}\int_D|f|^2e^{-\varphi } d\lambda ,
		$$
then $D$ is $q$-convex.
\end{thm}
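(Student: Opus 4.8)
The plan is to mirror the proof of Theorem \ref{thm:main-2}, replacing the Bochner--Kodaira--Nakano machinery for $\dbar$ by the corresponding Bochner--Weitzenb\"ock identity for the de Rham operator $d$ on a weighted space $L^2(D, e^{-\varphi}d\lambda)$. First I would argue by contradiction: suppose $D$ is not $q$-convex at some boundary point $p \in \partial D$, so that if $\rho$ is a defining function for $D$, the sum of the $q$ smallest eigenvalues of the Hessian $\Hess\,\rho$ restricted to $T_p\partial D$ is strictly negative. After a rotation and translation we may assume $p$ is the origin and the inward normal points in a fixed coordinate direction; I would then choose, for each small parameter, a weight $\varphi = a\|x-x_0\|^2 - b$ with $x_0$ placed just outside $D$ along the outer normal at $p$ and $a$ large, so that $D$ is comparable to a thin convex ``cap'' near $p$ in the relevant scale, exactly as in the localization scheme of Deng--Ning--Wang--Zhou.

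The key computation is to establish a boundary Bochner-type inequality: for a smooth $q$-form $u$ on $\bar D$ lying in $\Dom(d^*_\varphi)$, one has an identity of the schematic shape
\begin{equation*}
\|du\|_\varphi^2 + \|d^*_\varphi u\|_\varphi^2 = \|\nabla u\|_\varphi^2 + \int_D \inner{\Hess_\varphi \cdot u, u}\, e^{-\varphi}\, d\lambda + \int_{\partial D} \inner{\Hess\,\rho \cdot u, u}\, e^{-\varphi}\, dS,
\end{equation*}
where $\Hess_\varphi$ acts on $q$-forms as the sum of $q$ eigenvalues of $\Hess\,\varphi = 2aI$, hence equals $2aq$, and the boundary term is controlled by the sums of $q$ eigenvalues of $\Hess\,\rho$ on $T\partial D$. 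Feeding this identity into the hypothesis of the theorem as in the $\dbar$ case: given any $d$-closed $f \in \wedge^q(\bar D) \cap \Dom(d^*_\varphi)$, the solvability with the sharp estimate $\int_D|u|^2 e^{-\varphi} \leq \frac{1}{2aq}\int_D |f|^2 e^{-\varphi}$ is, by the standard duality/minimal-solution argument, equivalent to the ``reverse'' a priori inequality
\begin{equation*}
\left| \int_D \inner{f, \alpha}\, e^{-\varphi}\, d\lambda \right|^2 \leq \frac{1}{2aq} \int_D |f|^2 e^{-\varphi}\, d\lambda \cdot \|d^*_\varphi \alpha\|_\varphi^2
\end{equation*}
for all $\alpha$, and combining this with the Bochner identity forces the boundary integral $\int_{\partial D}\inner{\Hess\,\rho\cdot u,u}e^{-\varphi}dS$ to be non-negative for a rich enough family of test forms $u$ supported near $p$. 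Choosing $u$ to concentrate on the $q$-dimensional eigenspace realizing the negative eigenvalue sum of $\Hess\,\rho$ at $p$, and letting the localization parameter degenerate, yields a strictly negative boundary contribution --- a contradiction. Hence $D$ is $q$-convex.

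I expect the main obstacle to be the boundary-term analysis rather than the interior Bochner identity: one must verify that the minimal-norm solution $u$ produced (or, dually, the extremal $\alpha$) is regular up to $\bar D$ and genuinely lies in $\Dom(d^*_\varphi)$ with the correct boundary condition $\inner{\sigma(d^*_\varphi, d\rho)u}|_{\partial D} = 0$, so that integration by parts is legitimate and the boundary integrand is precisely the second fundamental form of $\partial D$ paired against $u$. A secondary technical point is to make the localization quantitative: choosing $x_0$, $a$, $b$, and the test forms so that, after rescaling, the contributions away from $p$ are negligible and the sign of the limiting boundary term is controlled solely by the sum of the $q$ smallest Hessian eigenvalues of $\rho$ at $p$. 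Both points are handled by the same real-variable arguments used in \cite{Deng-Zhang} for the convex setting, now applied to $q$-forms instead of functions, with $\Hess$ in place of the complex Levi form throughout.
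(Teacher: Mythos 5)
Your proposal follows the paper's own argument essentially step for step: the duality/Cauchy--Schwarz reduction of the hypothesis to an a priori inequality, the Morrey--Kohn--H\"ormander identity with boundary term (Lemma \ref{lem:Bochner-identity-boundary-term-d-operator}), the substitution $\alpha=\Hess_{\varphi}^{-1}f$ to isolate the inequality $\|\nabla\alpha\|_{\varphi}^2+\int_{\partial D}\inner{\Hess_{\rho}\alpha,\alpha}e^{-\varphi}\,dV\geq 0$, and a Gaussian-weight localization at a non-$q$-convex boundary point using global test forms built (via Lemma \ref{lem:exist-d-closed-form}) as wedges of $d$-closed $1$-forms in $\Dom(d^*)$, so no boundary regularity of the minimal solution $u$ is actually needed. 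The one point to state more carefully is that the identity does not force the boundary integral alone to be non-negative, only its sum with the interior gradient term; the contradiction therefore rests on the dimension count (the interior integral is over an $n$-dimensional set, the boundary integral over an $(n-1)$-dimensional one, so the latter dominates as the weight parameter degenerates), which the paper makes quantitative via Morse's lemma and which your sketch correctly defers to the real-variable arguments of \cite{Deng-Zhang}.
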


\begin{thm}\label{thm:main-4}
	Let $D$ be a bounded domain in $\mr^n$ with smooth boundary. 
	Let $\{D_j\}$ be a sequence of open subsets of $D$ with $D_j\subset D_{j+1} $ and $\bigcup_jD_j=D$. If each  $D_j$ is $q$-convex, 
then $D$ is $q$-convex.
\end{thm}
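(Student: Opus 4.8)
The plan is to deduce Theorem \ref{thm:main-4} from Theorem \ref{thm:main-3} in exactly the same way Theorem \ref{thm:main-1} is deduced from Theorem \ref{thm:main-2}, so the work is to set up the real-variable functional analysis parallel to the $\dbar$-case and then invoke the characterization. First I would fix a strictly convex weight $\varphi = a\|x-x_0\|^2 - b$ with $a,b>0$ and $x_0\in\mr^n$, and a $d$-closed form $f\in\wedge^q(\bar D)\cap\Dom(d^*_\varphi)$. On each $D_j$ the hypothesis that $D_j$ is $q$-convex gives, via the standard Hörmander/Morrey–Kohn machinery for the de Rham complex with the convex weight $\varphi$ (the Bochner–Kodaira identity in the real setting has curvature term $\Hess_\varphi = 2a\,\Id$, whose sum of any $q$ eigenvalues is $2aq$), a solution $u_j$ on $D_j$ of $d u_j = f$ with
$$
\int_{D_j}|u_j|^2 e^{-\varphi}\,d\lambda \leq \frac{1}{2aq}\int_{D_j}|f|^2 e^{-\varphi}\,d\lambda \leq \frac{1}{2aq}\int_{D}|f|^2 e^{-\varphi}\,d\lambda.
$$
Here one takes $u_j$ to be the minimal ($L^2$-norm) solution, i.e. the one orthogonal to the kernel of $d$ in $L^2(D_j,e^{-\varphi})$.

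Next I would pass to the limit. The bound above is uniform in $j$, and since $D_j\subset D_{j+1}$ one can regard each $u_j$ (extended by the minimal-solution property, or simply restricted to any fixed $D_{j_0}$ for $j\geq j_0$) as a bounded sequence in $L^2_{\loc}(D,e^{-\varphi})$; a diagonal/weak-compactness argument extracts a subsequence converging weakly in $L^2_{\loc}$ to some $u$ on $D$. Weak convergence preserves the distributional equation $du = f$ on $D$, and lower semicontinuity of the norm under weak limits gives
$$
\int_{D}|u|^2 e^{-\varphi}\,d\lambda \leq \liminf_j \int_{D_j}|u_j|^2 e^{-\varphi}\,d\lambda \leq \frac{1}{2aq}\int_{D}|f|^2 e^{-\varphi}\,d\lambda = \int_D \inner{\Hess_\varphi^{-1}f,f}\,e^{-\varphi}\,d\lambda.
$$
Thus $D$ satisfies exactly the solvability-with-estimate hypothesis of Theorem \ref{thm:main-3}, and that theorem then yields that $D$ is $q$-convex, completing the proof.

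There are two points that need care. First, one should check that $f\in\Dom(d^*_\varphi)$ on $D$ together with $f\in\wedge^q(\bar D)$ still allows the argument on each $D_j$: since $f$ is smooth up to $\bar D$ and $D_j$ is an arbitrary open subset (not assumed smoothly bounded), the solvability on $D_j$ should be taken in the distributional/weak sense — $q$-convexity of $D_j$ must be interpreted as guaranteeing the relevant $L^2$-estimate for the de Rham complex on $D_j$, which is the form in which it is used; if $D_j$ is not smooth one approximates $D_j$ from inside by smooth $q$-convex subdomains or appeals directly to the $L^2$ a priori estimate. Second, the weak-limit solution $u$ a priori lies only in $L^2_{\loc}(D,e^{-\varphi})$, but because $\varphi$ is bounded on the bounded set $D$ and the right-hand side is globally bounded, the estimate forces $u\in L^2(D,e^{-\varphi})$ globally, so no genuine loss occurs.

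The main obstacle I expect is the regularity/approximation issue for the non-smooth exhausting sets $D_j$: making precise the sense in which "$D_j$ is $q$-convex" delivers a usable $L^2$-solvability statement for $du=f$ with the sharp constant $1/(2aq)$, and ensuring the minimal solutions $u_j$ are chosen coherently enough that the weak limit satisfies the global estimate rather than merely a local one. Everything else — the Bochner identity with $\Hess_\varphi = 2a\,\Id$, the uniform bound, weak compactness, lower semicontinuity, and the final appeal to Theorem \ref{thm:main-3} — is routine and exactly mirrors the complex case behind Theorem \ref{thm:main-1}.
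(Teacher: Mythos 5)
Your proposal is correct and follows exactly the route the paper intends: the paper omits the proof of Theorem \ref{thm:main-4}, stating it mirrors the complex case, and your argument is the faithful real-variable transcription of the proof of Theorem \ref{thm:main-1} (solve $du_j=f$ on each $D_j$ via the $L^2$ existence theorem on $q$-convex domains with weight $\Hess_\varphi=2a\,\Id$, extract a weak limit with the same estimate, then invoke Theorem \ref{thm:main-3}). Your worries about non-smooth $D_j$ and the minimal-solution choice are largely moot, since $q$-convexity as defined here presupposes smooth boundary and any solution satisfying the uniform bound suffices for the weak-compactness step.
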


A domain is called a \( q \)-convex domain (in the sense of Harvey-Lawson) if the sum of any \( q \) eigenvalues of the second fundamental form on the tangent space of $\partial D$ is non-negative  (\cite{Harvey-Lawson-12},\cite{Harvey-Lawson-13}). 
J.P.Sha and H.H.Wu proved that such domains are homotopy equivalent to a CW complex of dimension no greater than \( q-1 \) (\cite{WuHH-1987}, \cite{Sha-1986}).
When $q=1$, $q$-convex domain is just the convex domain in the usual sense.
$L^2$ existence results for $d$ operator on $q$-convex domain was studied in \cite{Ji-Liu-Yu-2014}.Theorem \ref{thm:main-4} can be seen as a compact version of Theorem 4.4 in \cite{Harvey-Lawson-12} with different approach,
the proof of Theorem \ref{thm:main-4} in our arguement is essentially similar to Theorem \ref{thm:main-2},
thus we omit the details of the proof of Theorem \ref{thm:main-4} in this note.

The rest of this note is organized as follows.
In \S \ref{sec:notations}, we clarify some notations.
We prove Theorem \ref{thm:main-2} in \S \ref{sec:partial-pseducocovex}, then we prove Theorem \ref{thm:main-1} in \S \ref{sec:proof-union}.
The Theorem \ref{thm:main-3} is proved in \S \ref{sec:convex-analogy}.

\section{Preliminary}\label{sec:notations}

\subsection{Notations for the $\dbar $ complex}
The coordinate on $\mc^n$ will be denoted by $z=(z_1,\cdots, z_n)$, with $z_j=x_j+iy_j$, $(j=1,\cdots, n).$
We assume that $D$ is a bounded domain in $\mc^n$ with smooth boundary.
Then there exits a smooth function $\rho:\mc^n \ra \mr$  such that
$$
D=\left\{z \in \mc^n; \rho(z)<0 \right\}
$$
and $\nabla \rho|_{\pa D} \neq 0$, where
$$\nabla\rho=\sum_{j=1}^n(\frac{\partial\rho}{\partial x_j}\frac{\partial}{\partial x_j}+\frac{\partial\rho}{\partial y_j}\frac{\partial}{\partial y_j})$$
is the gradient of $\rho$.
We take a normalization such that $|\nabla\rho|\equiv 1$ on $\partial D$.
Let $D$ be a domain in $\mc^n$.
We denote by $\wedge^{0,q}(D)$ the space of smooth $(0,q)$-forms on $D$, for any $0\leq q\leq n$ ((0,0)-forms are just smooth functions), and
$\wedge^{0,q}_c(D)$ the elements in
$\wedge^{0,q}(D)$ with compact support.
Let $\varphi$ be a real-valued continuous function on $D$.
Given $\alpha=\sum_I\alpha_Id\bar z_I, \beta=\sum_I\beta_Id\bar z_I\in \wedge^{0,q}(D)$, we define the products of $\alpha$ and $\beta$ and the corresponding norm with respect to $\varphi$ as follows:
$$
\begin{aligned}
	\inner{\alpha, \beta}_{\varphi} &= \sum_I \alpha_I \cdot \overline\beta_I e^{-\varphi},
	|\alpha|^2_{\varphi}=\inner{\alpha,\alpha}_{\varphi}. \\
	\iinner{\alpha, \beta}_{\varphi} &=\int_D \inner{\alpha,\beta}e^{-\varphi} d\lambda,
	\|\alpha \|^2_{\varphi}=\iinner{\alpha,\alpha}_{\varphi},
\end{aligned}
$$
for simplicity, we write $\inner{\alpha, \beta}=\inner{\alpha, \beta}_{0}$.

Let $L^2_{(0,q)} (D,\varphi)$ be the completion of $\wedge^{0,q}_c(D)$ with respect to the inner product $\iinner{\cdot,\cdot}_{\varphi}$,
then $L^2_{(0,q)}(D,\varphi)$ is a Hilbert space and
$\dbar:L^2_{(0,q)}(D,\varphi)\ra L^2_{(0,q+1)}(D,\varphi)$ is a  closed and densely defined operator.
Let $\dbar_\varphi^{*}$ be the Hilbert adjoint of $\dbar$.
For convenience,
we denote $L^2 (D, \varphi):=L^2_{(0,0)} (D, \varphi)$.

Since
$$
\dbar \circ \dbar =0,
$$
we can define the weighted $\dbar$-complex as follows
$$
0 \ra L^2_{p,0}(D,\varphi) \xrightarrow[]{\dbar} L^2_{p,1}(D,\varphi) \xrightarrow[]{\dbar} L^2_{p,2}(D,\varphi) 
\xrightarrow[]{\dbar} \cdots \xrightarrow[]{\dbar} L^2_{p,n}(D,\varphi) \ra 0.
$$

One can show that for $\alpha\in \wedge^{0,q}(\bar{D})$ that
\begin{equation}\label{eq:domain of dbar^*}
\alpha\in \text{Dom}(\bar\partial^*_\varphi)\Longleftrightarrow  \alpha \in \Ker(
\sum^{n}_{i=1} \frac{\pa \rho}{\pa z_{i}} \frac{\pa }{\pa \bar{z}_{i}} \lrcorner ) \; on \; \pa D.
\end{equation}
Note that the condition on the right-hand side of the above formula is independent of the weight $\varphi$.

We recall the following Morrey-Kohn-H\"ormander identity for $\dbar$-operator with boundary term.
\begin{lem}\label{lem:Bochner-identity-boundary-term-dbar-operator}
For $\alpha=\sum_{|I|=q}^\prime\alpha_I d\bar z_I \in \wedge^{0,q}(\bar{D}) \cap \text{Dom}(\bp_\varphi^*) $, we have
\begin{equation}
\int _{ D} (|\bp^{*}_{\varphi } \alpha |^2+|\bp \alpha |^2)e^{-\varphi}d \lambda=\| \overline{\nabla} \alpha  \|^2_{\varphi}+
\iinner{\textup{Levi}_{\varphi} \alpha, \alpha}_{\varphi}
+\int_{\pa D} \inner{\Levi_{\rho} \alpha, \alpha}e^{-\varphi} dV
\end{equation}
where $d\lambda$ and $dV$ denote the Lebsgue measures on $\mc^n$ and $\partial D$, respectively,
$$\| \overline{\nabla} \alpha  \|^2_{\varphi}=\sum \int_D |\frac{\partial\alpha_I}{\partial\bar z_j}|^2_\varphi,$$   
$$\langle \Levi_\rho\alpha,\alpha\rangle=\sum_{i,j}\sum_{|K|=q-1}^\prime\rho_{i\bar j}\alpha_{iK}\bar{\alpha_{jK}}.$$
\end{lem}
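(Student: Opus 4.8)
The plan is to prove the identity by the classical Kohn--Morrey integration-by-parts scheme, the only genuinely non-routine input being the extraction of the boundary Levi term from the condition $\alpha\in\Dom(\bp^*_\varphi)$. Since $\alpha$ is smooth up to $\bar D$ and lies in $\Dom(\bp^*_\varphi)$, the Hilbert-space adjoint agrees with the formal adjoint, so I may replace $\bp^*_\varphi\alpha$ by $\vartheta_\varphi\alpha=-\sum_{j}\sum_{|K|=q-1}^\prime(\delta_j\alpha_{jK})\,d\bar z_K$, where $\delta_j:=\frac{\pa}{\pa z_j}-\frac{\pa\varphi}{\pa z_j}=e^{\varphi}\frac{\pa}{\pa z_j}(e^{-\varphi}\,\cdot\,)$. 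Two elementary facts drive the computation. The first is the weighted divergence identity
\[
\int_D(\delta_j f)\,\bar g\,e^{-\varphi}\,d\lambda=-\int_D f\,\overline{\tfrac{\pa g}{\pa \bar z_j}}\,e^{-\varphi}\,d\lambda+\int_{\pa D}f\bar g\,\tfrac{\pa\rho}{\pa z_j}\,e^{-\varphi}\,dV,
\]
which follows from the divergence theorem together with the normalization $|\nabla\rho|\equiv1$ on $\pa D$ (this is what fixes the surface measure $dV$). The second is the commutator relation $[\delta_j,\frac{\pa}{\pa\bar z_k}]=\frac{\pa^2\varphi}{\pa z_j\pa\bar z_k}=:\varphi_{j\bar k}$.

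First I would expand $|\bp\alpha|^2$ pointwise in the frame $\{d\bar z_j\}$; antisymmetrizing the wedge indices yields the standard decomposition $|\bp\alpha|^2=\sum_{I,j}\big|\frac{\pa\alpha_I}{\pa\bar z_j}\big|^2-\sum_{K,j,k}\frac{\pa\alpha_{jK}}{\pa\bar z_k}\overline{\frac{\pa\alpha_{kK}}{\pa\bar z_j}}$, whose first sum integrates to $\|\overline{\nabla}\alpha\|^2_\varphi$. Next I would expand $\|\vartheta_\varphi\alpha\|^2_\varphi=\sum_{j,k,K}\int_D(\delta_j\alpha_{jK})\overline{(\delta_k\alpha_{kK})}\,e^{-\varphi}\,d\lambda$ and integrate by parts once via the divergence identity. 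The resulting boundary term carries the factor $\sum_j\frac{\pa\rho}{\pa z_j}\alpha_{jK}$, which vanishes identically on $\pa D$ by the characterization \eqref{eq:domain of dbar^*} of $\Dom(\bp^*_\varphi)$. Applying the commutator to the interior term splits off exactly $\iinner{\Levi_\varphi\alpha,\alpha}_\varphi$, and a second integration by parts produces an interior integral that cancels the off-diagonal term of $|\bp\alpha|^2$, leaving one further boundary integral
\[
(b)=\sum_{j,k,K}\int_{\pa D}\alpha_{jK}\,\tfrac{\pa\rho}{\pa\bar z_k}\,\overline{\tfrac{\pa\alpha_{kK}}{\pa\bar z_j}}\,e^{-\varphi}\,dV.
\]

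The main obstacle is to identify $-(b)$ with the Levi boundary term $\int_{\pa D}\inner{\Levi_\rho\alpha,\alpha}e^{-\varphi}\,dV$. Here I would use that $\psi_K:=\sum_k\frac{\pa\rho}{\pa\bar z_k}\overline{\alpha_{kK}}$, being the complex conjugate of $\sum_k\frac{\pa\rho}{\pa z_k}\alpha_{kK}$, vanishes identically on $\pa D$; writing $\psi_K=h_K\rho$ near $\pa D$ gives $\frac{\pa}{\pa z_j}\psi_K=h_K\frac{\pa\rho}{\pa z_j}$ on $\pa D$, whence $\sum_k\frac{\pa\rho}{\pa\bar z_k}\frac{\pa}{\pa z_j}\overline{\alpha_{kK}}=h_K\frac{\pa\rho}{\pa z_j}-\sum_k\rho_{j\bar k}\overline{\alpha_{kK}}$ on $\pa D$. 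Substituting this together with $\overline{\frac{\pa\alpha_{kK}}{\pa\bar z_j}}=\frac{\pa}{\pa z_j}\overline{\alpha_{kK}}$ turns $(b)$ into a term proportional to $\sum_j\frac{\pa\rho}{\pa z_j}\alpha_{jK}$, which vanishes again by the boundary condition, plus $-\sum_{j,k,K}\int_{\pa D}\rho_{j\bar k}\alpha_{jK}\overline{\alpha_{kK}}\,e^{-\varphi}\,dV$. Hence $-(b)=\int_{\pa D}\inner{\Levi_\rho\alpha,\alpha}e^{-\varphi}\,dV$, and collecting the diagonal term $\|\overline{\nabla}\alpha\|^2_\varphi$, the interior Levi term $\iinner{\Levi_\varphi\alpha,\alpha}_\varphi$, and this boundary Levi term yields the asserted identity. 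The only delicate points are the bookkeeping of the antisymmetric index sets in the expansion step and the double use of the boundary condition in the final step; both become routine once the divergence identity is set up with the normalization $|\nabla\rho|\equiv1$.
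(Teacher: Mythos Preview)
Your argument is the standard Kohn--Morrey--H\"ormander computation and is correct: the two integrations by parts against the weighted divergence identity, the commutator $[\delta_j,\pa_{\bar z_k}]=\varphi_{j\bar k}$ producing the interior Levi term, and the use of $\sum_j\rho_{z_j}\alpha_{jK}=0$ on $\pa D$ (first to kill the initial boundary term, then---via $\psi_K=h_K\rho$---to convert the remaining boundary integral into $\inner{\Levi_\rho\alpha,\alpha}$) are all in order. One small point of exposition: after the second integration by parts the surviving boundary contribution is $-(b)$ rather than $(b)$, which is exactly what you then identify with the Levi boundary term; your final line ``Hence $-(b)=\int_{\pa D}\inner{\Levi_\rho\alpha,\alpha}e^{-\varphi}\,dV$'' is consistent with this, so the overall sign bookkeeping is fine.

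As for comparison with the paper: there is nothing to compare. The paper does not prove Lemma~\ref{lem:Bochner-identity-boundary-term-dbar-operator}; it merely \emph{recalls} the identity as the classical Morrey--Kohn--H\"ormander formula and uses it as input for the proofs of Theorems~\ref{thm:main-2} and~\ref{thm:main-1}. Your write-up therefore supplies a self-contained proof where the paper defers to the literature, and it follows the same route one finds in H\"ormander or Kohn.
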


\subsection{Notations for variants of partial convexity}

Next, we recall the concept of hyper-$q$-convex domains, which was introduced by Grauert and Riemenschneider (\cite{Grauert-Riemenschneider}) as a natural generalization of pseudoconvex domains.

\begin{defn}[\cite{Grauert-Riemenschneider},\cite{SYT-1982}]
 Let $D$ be a bounded domain in $\mc^n$ with smooth boundary defining functions $\rho$.
 The boundary \(\partial D\) of \(D \) is said to be \textit{hyper-\(q\)-convex} (respectively \textit{strongly hyper-\(q\)-convex}) at \( z_0 \in \partial D \) if the sum of any \( q \) eigenvalues of the 
 Levi form on the complex tangent space $T_{z_0}\partial D$ is nonnegative (respectively positive). 
 When \(\partial D\) is hyper-\(q\)-convex at every point of \(\partial D\), we simply say that \(\partial D\) is hyper-\(q\)-convex.
\end{defn}

The following theorem provides equivalent characterizations of hyper-$q$-convex domains.

\begin{thm}[\cite{Ho-1991}]\label{prop:properties-hyper-q-convex-domain}
Let $\Omega$ be a domain in $\mc^n$ with $\text{C}^2$-smooth boundary. The following conditions are equivalent:
\begin{itemize}
    \item[(i)] $\Omega$ is a hyper-$q$-convex domain.
    \item[(ii)] The sum of any $q$ eigenvalues of the Levi form on the complex tangent space $T^{\mc}\partial \Omega$ is non-negative.
    \item[(iii)] Let $\rho$ be a defining function for $\Omega$. For any smooth $(0,q)$-form $\alpha=\sum \alpha_{I}d\bar{z}_{I} \in \wedge^{0,q}(\bar\Omega)$ satisfying
        \begin{equation}\label{eq:case-dbar-neumann}
        \sum^{n}_{i=1} \alpha_{iK}\frac{\partial \rho}{\partial z_{i}}=0 \text{ for all } |K|=q-1 \text{ on } \partial\Omega,
        \end{equation}
        the following inequality holds:
        \begin{equation}\label{eq:defn-hyper-q-convex}
	        \langle \Levi_{\rho} \alpha, \alpha \rangle \geq 0 \text{ on } \partial \Omega.	
		\end{equation}
\end{itemize}
\end{thm}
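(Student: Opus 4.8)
The plan is to regard $(\mathrm{i})\Leftrightarrow(\mathrm{ii})$ as immediate from the definition of hyper-$q$-convexity and to concentrate on the pointwise equivalence $(\mathrm{ii})\Leftrightarrow(\mathrm{iii})$. For $(\mathrm{i})\Leftrightarrow(\mathrm{ii})$ the only point to record is that the sign of any sum of $q$ eigenvalues of the Levi form on $T^{\mc}\partial\Omega$ does not depend on the chosen defining function: two defining functions differ near $\partial\Omega$ by a positive smooth factor $h$, and since on $\partial\Omega$ one has $\rho=0$ and $\partial\rho$ annihilates $T^{\mc}\partial\Omega$, the complex Hessian of $h\rho$ restricted to $T^{\mc}\partial\Omega$ is exactly $h(z_0)$ times that of $\rho$, so all eigenvalue-sum signs are preserved. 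From here I fix $\rho$ and work at an arbitrary $z_0\in\partial\Omega$.

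Next I would reduce $(\mathrm{ii})\Leftrightarrow(\mathrm{iii})$ at $z_0$ to linear algebra. Applying a unitary $\mc$-linear change of coordinates centred at $z_0$, I may assume $\partial\rho(z_0)$ is a positive multiple of $dz_n$, so that $T^{\mc}_{z_0}\partial\Omega=\{dz_n=0\}=\mathrm{span}_{\mc}\{\partial_{z_1},\dots,\partial_{z_{n-1}}\}$; such a change carries defining functions to defining functions, intertwines the boundary condition \eqref{eq:case-dbar-neumann} with its analogue, and transforms $\langle\Levi_\rho\alpha,\alpha\rangle$ by the induced unitary action on $(0,q)$-forms, so nothing is lost. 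In these coordinates \eqref{eq:case-dbar-neumann} becomes $\alpha_{nK}=0$ for all $|K|=q-1$, i.e. the admissible $\alpha$ are precisely the $(0,q)$-forms with all indices in $\{1,\dots,n-1\}$, and
\begin{equation*}
\langle\Levi_\rho\alpha,\alpha\rangle(z_0)=\sum_{|K|=q-1}^\prime\;\sum_{1\le i,j\le n-1}A_{ij}\,\alpha_{iK}\overline{\alpha_{jK}}=:Q_A(\alpha),
\end{equation*}
where $A=(\rho_{i\bar j}(z_0))_{1\le i,j\le n-1}$ is, up to a positive normalization constant, the matrix of the Levi form of $\partial\Omega$ at $z_0$. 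Thus $(\mathrm{iii})$ at $z_0$ says $Q_A\ge 0$ on $\wedge^{0,q}(\mc^{n-1})$, while $(\mathrm{ii})$ at $z_0$ says the sum of any $q$ eigenvalues of $A$ is $\ge 0$; it remains to match these.

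The core lemma I would prove is: for a Hermitian $(n-1)\times(n-1)$ matrix $A$ with eigenvalues $\lambda_1\le\cdots\le\lambda_{n-1}$, the form $Q_A$ is positive semidefinite on $\wedge^{0,q}(\mc^{n-1})$ if and only if $\lambda_{i_1}+\cdots+\lambda_{i_q}\ge 0$ for every $1\le i_1<\cdots<i_q\le n-1$. Diagonalising $A=U^*\Lambda U$, the induced unitary action of $U$ on $\wedge^{0,q}(\mc^{n-1})$ conjugates $Q_A$ to $Q_\Lambda$, so one reduces to $A=\Lambda$ diagonal; then, collecting for each increasing $q$-index $I=\{i_1<\dots<i_q\}$ the $q$ summands with $i=i_\ell$, $K=I\setminus\{i_\ell\}$ and using $|\alpha_{i_\ell,\,I\setminus\{i_\ell\}}|=|\alpha_I|$,
\begin{equation*}
Q_\Lambda(\alpha)=\sum_{|I|=q}^\prime(\lambda_{i_1}+\cdots+\lambda_{i_q})\,|\alpha_I|^2,
\end{equation*}
which is $\ge 0$ for all $\alpha$ exactly when every sum of $q$ eigenvalues is $\ge 0$ (the forward implication by testing on $\alpha$ supported on a single $I$). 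Combining the three displays gives $(\mathrm{iii})\Leftrightarrow(\mathrm{ii})$ at $z_0$, and since $z_0\in\partial\Omega$ is arbitrary this closes the chain $(\mathrm{i})\Leftrightarrow(\mathrm{ii})\Leftrightarrow(\mathrm{iii})$.

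I expect the main obstacle to be purely organizational: keeping the antisymmetrization conventions for $\alpha_{iK}$ versus $\alpha_I$ consistent, and checking carefully that the unitary coordinate changes on $\mc^n$ and then on the complex tangent space $\mc^{n-1}$ really induce unitary maps on $(0,q)$-forms that simultaneously intertwine \eqref{eq:case-dbar-neumann} and the Levi quadratic form. None of this is deep, but it is where sign errors would hide; an alternative that sidesteps the diagonalisation is to deduce the core lemma from the Ky Fan minimax formula for sums of eigenvalues, though the diagonalisation route seems more transparent to present.
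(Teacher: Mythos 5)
The paper does not prove this statement at all: it is quoted verbatim from Ho's paper \cite{Ho-1991} and used as a black box, so there is no internal proof to compare against. Your argument is the standard one and is essentially correct: the defining-function independence via $(h\rho)_{i\bar j}v_i\bar v_j=h\,\rho_{i\bar j}v_i\bar v_j$ on $\ker\partial\rho$ is right, the reduction to the matrix $A=(\rho_{i\bar j})_{1\le i,j\le n-1}$ after rotating $\partial\rho(z_0)$ onto $dz_n$ is right, and the core lemma $Q_\Lambda(\alpha)=\sum'_{|I|=q}(\sum_{i\in I}\lambda_i)|\alpha_I|^2$ after diagonalization is exactly the computation that makes the eigenvalue-sum condition (rather than semidefiniteness of $A$) the correct threshold; the unitary invariance you invoke is the paper's Lemma~2.4. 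The one step you should make explicit is the passage from the global condition (iii) to the pointwise statement ``$Q_A\ge 0$ on all admissible $\xi$ at $z_0$'': condition (iii) quantifies over forms defined on $\bar\Omega$ satisfying \eqref{eq:case-dbar-neumann} on all of $\partial\Omega$, so for the implication (iii)$\Rightarrow$(ii) you must extend a pointwise admissible $\xi$ at $z_0$ to such a global form. This is routine --- either wedge together the $(0,1)$-forms produced by the paper's Lemma~3.1, as the authors themselves do in the proof of Theorem~1.2, or set $\alpha=\tilde\xi-|\partial\rho|^{-2}\,\bar\partial\rho\wedge(T_\rho\tilde\xi)$ for a constant extension $\tilde\xi$ and check $T_\rho\alpha=0$ using $T_\rho^2=0$ --- but as written your proof silently treats (iii) as a pointwise condition, and that is the only place a referee would ask for a line.
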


The following lemma is also required in the proof.
\begin{lem}[\cite{Ho-1991}\cite{Kohn63}] \label{lemma:q-subharmonic-invariant}
	The condition \eqref{eq:defn-hyper-q-convex} is invariant under a unitary change of coordinates.
\end{lem}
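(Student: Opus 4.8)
The plan is to realize a unitary change of coordinates as pullback along a linear biholomorphism and to check that this pullback intertwines all the data occurring in \eqref{eq:case-dbar-neumann}--\eqref{eq:defn-hyper-q-convex}. Fix $U\in U(n)$, let $\Phi\colon\mc^n\to\mc^n$ be $\Phi(w)=Uw$, and set $\widetilde\Omega=\Phi^{-1}(\Omega)=U^{-1}\Omega$ and $\widetilde\rho=\rho\circ\Phi$. Since $\Phi$ is a linear isometry of $\mc^n\cong\mr^{2n}$, $\widetilde\rho$ is again a defining function for $\widetilde\Omega$ with unit gradient along $\pa\widetilde\Omega$. For $\alpha\in\wedge^{0,q}(\bar\Omega)$ put $\widetilde\alpha=\Phi^*\alpha\in\wedge^{0,q}(\overline{\widetilde\Omega})$. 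I will establish, for every $w_0\in\pa\widetilde\Omega$ with $z_0:=\Phi(w_0)\in\pa\Omega$, the two facts: (a) $\widetilde\alpha$ satisfies \eqref{eq:case-dbar-neumann} for $\widetilde\rho$ at $w_0$ if and only if $\alpha$ satisfies \eqref{eq:case-dbar-neumann} for $\rho$ at $z_0$; and (b) $\inner{\Levi_{\widetilde\rho}\widetilde\alpha,\widetilde\alpha}(w_0)=\inner{\Levi_\rho\alpha,\alpha}(z_0)$. Granting these, $\Phi^*$ is a linear bijection $\wedge^{0,q}(\bar\Omega)\to\wedge^{0,q}(\overline{\widetilde\Omega})$ which by (a) carries the forms satisfying \eqref{eq:case-dbar-neumann} for $\rho$ onto those satisfying it for $\widetilde\rho$, and which by (b) preserves the sign of the boundary quantity $\inner{\Levi_\rho\alpha,\alpha}$ pointwise; hence \eqref{eq:defn-hyper-q-convex} holds for all such forms on $\pa\Omega$ if and only if it holds for all such forms on $\pa\widetilde\Omega$, which is the asserted invariance.

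Facts (a) and (b) come from running the chain rule through the definitions. From $\Phi(w)=Uw$ one gets $\pa/\pa w_k=\sum_i U_{ik}\,\pa/\pa z_i$ and $d\bar z_i=\sum_k\overline{U_{ik}}\,d\bar w_k$, hence
\[
\frac{\pa\widetilde\rho}{\pa w_k}=\sum_i U_{ik}\Bigl(\frac{\pa\rho}{\pa z_i}\circ\Phi\Bigr),\qquad
\widetilde\rho_{k\bar l}=\sum_{i,j}U_{ik}\overline{U_{jl}}\,(\rho_{i\bar j}\circ\Phi),
\]
while the coefficients of $\widetilde\alpha$ are those of $\alpha$ transformed by the induced action of $\overline U$ on $\wedge^q\overline{\mc^n}$ (for $q=1$ simply $\widetilde\alpha_k=\sum_i\overline{U_{ik}}\,\alpha_i$). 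Substituting these into $\sum_k\widetilde\alpha_{kL}\,(\pa\widetilde\rho/\pa w_k)$ and into $\sum_{k,l}\sum_{|L|=q-1}^{\prime}\widetilde\rho_{k\bar l}\,\widetilde\alpha_{kL}\,\overline{\widetilde\alpha_{lL}}$, and collapsing the sums by $UU^*=U^*U=I$, all cross-terms cancel and one recovers exactly $\sum_i\alpha_{iK}\,(\pa\rho/\pa z_i)$ and $\inner{\Levi_\rho\alpha,\alpha}$, both evaluated at $z_0$. For $q=1$ this is the one-line computation $\widetilde H=U^{\mathsf{T}}H\overline U$, $\widetilde\alpha=U^{*}\alpha$, $\inner{\widetilde H\widetilde\alpha,\widetilde\alpha}=\inner{H\alpha,\alpha}$ with $H=(\rho_{i\bar j})$; the general $q$ is the same statement applied to the derivation extension $H^{[q]}$ of $H$ to $\wedge^q\overline{\mc^n}$, for which $\inner{\Levi_\rho\alpha,\alpha}=\inner{H^{[q]}\alpha,\alpha}$ and whose formation commutes with the $U(n)$-action.

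The only genuinely fiddly point is this last item for general $q$: organizing the exterior-power bookkeeping so that the transformation rule of the operator $\alpha\mapsto\bigl(\sum_{i,j}\rho_{i\bar j}\alpha_{iK}\bigr)_K$ on $(0,q)$-forms under $\Phi^*$ is transparent. Once that is in place, (a) and (b), and with them the lemma, are immediate. Should one wish to avoid the computation altogether, one can instead invoke Theorem \ref{prop:properties-hyper-q-convex-domain}: validity of \eqref{eq:defn-hyper-q-convex} for all forms satisfying \eqref{eq:case-dbar-neumann} is equivalent to the sum of any $q$ eigenvalues of the Levi form of $\pa\Omega$ on the complex tangent space being non-negative, and a unitary change of coordinates, being an isometry, carries $\pa\Omega$, its complex tangent spaces and its Levi form onto those of $\widetilde\Omega$, and hence preserves these eigenvalues.
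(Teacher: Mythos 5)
Your argument is correct. Note that the paper itself offers no proof of this lemma---it is quoted from \cite{Ho-1991} and \cite{Kohn63} as a black box---so there is nothing to compare against except the cited literature; your direct computation is a legitimate, self-contained verification. The chain-rule identities $\partial\widetilde\rho/\partial w_k=\sum_i U_{ik}(\partial\rho/\partial z_i)\circ\Phi$, $\widetilde\rho_{k\bar l}=\sum_{i,j}U_{ik}\overline{U_{jl}}(\rho_{i\bar j}\circ\Phi)$ and $\widetilde\alpha_k=\sum_i\overline{U_{ik}}\alpha_i$ are right, and the contractions collapse via $\sum_k U_{jk}\overline{U_{ik}}=\delta_{ij}$ exactly as you claim, both for the Neumann condition \eqref{eq:case-dbar-neumann} (which is naturality of the contraction $T_\rho$ under pullback, since $\Phi_*$ carries $\sum_k\widetilde\rho_{w_k}\,\partial/\partial\bar w_k$ to $\sum_i\rho_{z_i}\,\partial/\partial\bar z_i$) and for the quadratic form $\inner{\Levi_\rho\alpha,\alpha}$; the identification of the latter with $\inner{H^{[q]}\alpha,\alpha}$ for the derivation extension $H^{[q]}$, which commutes with the induced $U(n)$-action on $\wedge^q\overline{\mc^n}$, is the standard way to handle general $q$ and is what Ho and Kohn do as well. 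One caution on your proposed shortcut via Theorem \ref{prop:properties-hyper-q-convex-domain}: the equivalence of (ii) and (iii) there is proved in \cite{Ho-1991} precisely by diagonalizing the Levi form in a unitary frame, i.e.\ by using the present lemma, so invoking it here would be circular at the level of the underlying proofs; your computational argument is the one to keep as primary.
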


\section{Characterize the domain with smooth partial pseudoconvex boundary}\label{sec:partial-pseducocovex}

We prove Theorem \ref{thm:main-2} in this section.

Let $D$ be a bounded domain in $\mc^n$ with smooth boundary.
For any boundary point \( p_0 \) of \( D \) and any \( 0 \leq i \leq n \), given any $(0,i)$-form \( \xi \in \wedge^{0,i}_{p_0}(\bar D) \) satisfying (\ref{eq:case-dbar-neumann}) at $p_0$, does there exist an \( \alpha \in \wedge^{0,i}(\bar{D}) \cap \text{Dom}(\bar{\partial}^*) \) such that $\alpha(p_0) = \xi$ and $\bar{\partial} \alpha = 0$?

It is clear that if \( \alpha_1 \in \wedge^{0,i}(\bar{D}) \cap \text{Dom}(\bar{\partial}^*) \cap \text{Ker}(\bar{\partial}) \) and \( \alpha_1(p_0) = \xi_1 \), and \( \alpha_2 \in \wedge^{0,j}(\bar{D}) \cap \text{Dom}(\bar{\partial}^*) \cap \text{Ker}(\bar{\partial}) \) with \( \alpha_2(p_0) = \xi_2 \), then \( \alpha_1 \wedge \alpha_2 \in \wedge^{0,i+j}(\bar{D}) \cap \text{Dom}(\bar{\partial}^*) \cap \text{Ker}(\bar{\partial})  \), and \( \alpha_1 \wedge \alpha_2 (p_0) = \xi_1 \wedge \xi_2\). 

The following lemma 
present the solution to the above problem in the case when \( i = 1 \).

\begin{lem}[\cite{Deng-Zhang}] \label{lem:exist-dbar-closed-form}
For any boundary point \( p_0 \) of \( D \), given any $(0,1)$-form \( \xi = \sum \xi_i d\bar{z}_i \in \wedge^{0,1}_{p_0}(\bar D) \) satisfying (\ref{eq:case-dbar-neumann}) at $p_0$, there exists a function \( h \) on \( \mathbb{C}^n \) such that
$$
\sum_{i=1}^{n} \frac{\partial \rho}{\partial z_i} \frac{\partial h}{\partial \bar{z}_i} = 0
$$
and
$$
\bar{\partial} h(p_0) = \sum \xi_i d\bar{z}_i.
$$
\end{lem}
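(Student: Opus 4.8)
The plan is to correct the obvious linear candidate. Write $L:=\sum_{i=1}^n\rho_{z_i}\frac{\partial}{\partial\bar z_i}$; by the boundary characterization \eqref{eq:domain of dbar^*}, the relevant content of the displayed equation $\sum_i\rho_{z_i}\,\partial h/\partial\bar z_i=0$ is that it holds on $\partial D$ — equivalently $\bar\partial h\in\Dom(\bar\partial^*)$ — and this is what the construction produces. So the goal becomes: find $h\in C^\infty(\mathbb C^n)$ with $Lh=0$ on $\partial D$ and $\bar\partial h(p_0)=\xi$.

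First I would take the affine function $h_0:=\sum_{i=1}^n\xi_i(\bar z_i-\bar z_{0,i})$, where $p_0=z_0$ and the $\xi_i$ are regarded as constants. Then $\bar\partial h_0=\sum_i\xi_i\,d\bar z_i$ identically, so the value at $p_0$ is already correct; the obstruction is the smooth function $g:=Lh_0=\sum_i\xi_i\rho_{z_i}$, and the whole point of the hypothesis \eqref{eq:case-dbar-neumann} is that $g(p_0)=\sum_i\xi_i\rho_{z_i}(p_0)=0$. It then remains to subtract from $h_0$ a function $w$ with $Lw=g$ near $\partial D$ whose first-order $\bar\partial$-jet vanishes at $p_0$.

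To produce $w$, I would use $L\rho=\sum_i|\rho_{z_i}|^2=\tfrac14|\nabla\rho|^2$, which equals $\tfrac14$ on $\partial D$ by the chosen normalization and hence is positive on an open neighbourhood $U$ of $\partial D$. On $U$ set $w:=g\rho/(L\rho)$, a smooth function there. The Leibniz rule gives $Lw=\rho\,L\!\bigl(g/(L\rho)\bigr)+g$, so $Lw=g$ on $\partial D$; and since $\rho$ and $g$ both vanish at $p_0$, we get $w(p_0)=0$ and $\bar\partial w(p_0)=0$. Hence $h:=h_0-w$ satisfies $Lh=-\rho\,L\!\bigl(g/(L\rho)\bigr)=0$ on $\partial D$ and $\bar\partial h(p_0)=\xi$, as wanted. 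To make $h$ globally defined on $\mathbb C^n$ I would replace $w$ by $\chi w$ for a cutoff $\chi\in C_c^\infty(U)$ with $\chi\equiv1$ near $\partial D$; since $\chi\equiv1$ on a neighbourhood of $\partial D\ni p_0$, this affects neither $Lh$ on $\partial D$ nor $\bar\partial h(p_0)$.

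There is no deep obstacle here; the single point that must be respected is that the correction $w$ cannot be allowed to perturb the prescribed derivative $\bar\partial h(p_0)=\xi$, and this is precisely what \eqref{eq:case-dbar-neumann} secures, via $g(p_0)=0$, which is what lets $w$ carry a factor of $\rho$ (vanishing at $p_0$). Alternatively, one could first apply a unitary change of coordinates — legitimate since all the relevant norms and the condition \eqref{eq:case-dbar-neumann} are unitarily invariant, cf.\ Lemma~\ref{lemma:q-subharmonic-invariant} — to normalize $\partial\rho(p_0)$ to a multiple of $dz_n$, so that \eqref{eq:case-dbar-neumann} becomes simply $\xi_n=0$; this only shortens the bookkeeping and is not essential.
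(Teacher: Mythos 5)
Your construction is correct, and it is a genuinely different route from the one the paper (implicitly, via its proof of the real analogue, Lemma~\ref{lem:exist-d-closed-form}) takes. The paper's method is geometric: pick a function on $\partial D$ with the prescribed differential at $p_0$, extend it into a collar neighbourhood constantly along the normal direction so that $\nabla\rho\lrcorner\,dh$ vanishes near the boundary, then cut off. Your method is algebraic: start from the affine candidate $h_0=\sum_i\xi_i(\bar z_i-\bar z_{0,i})$, whose $\bar\partial$ is already the right constant form, and kill the boundary obstruction $g=Lh_0$ by subtracting $w=g\rho/(L\rho)$; the derivation property of $L$ gives $Lw=g+\rho\,L\bigl(g/(L\rho)\bigr)$, so $L(h_0-w)=0$ on $\partial D$, while $\bar\partial w(p_0)=0$ because both $\rho$ and $g$ vanish at $p_0$ — the latter being exactly the content of \eqref{eq:case-dbar-neumann} for a $(0,1)$-form. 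All the computations check out ($L\rho=\sum_i|\rho_{z_i}|^2=\tfrac14|\nabla\rho|^2>0$ near $\partial D$, and the cutoff does not disturb anything near $p_0$ or on $\partial D$). The one deviation worth flagging is that you only arrange $\sum_i\rho_{z_i}\,\partial h/\partial\bar z_i=0$ \emph{on} $\partial D$ rather than identically; as you correctly observe, by \eqref{eq:domain of dbar^*} this is all that is ever used (membership of $\bar\partial h$ in $\Dom(\bar\partial^*)$ and the vanishing of $T_\rho$ on the wedge products in the proof of Theorem~\ref{thm:main-2} are boundary conditions), and the paper's own cutoff argument has the same feature in the transition region of $\chi$. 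What your approach buys is explicitness and brevity — no boundary extension or Morse-theoretic normal coordinates are needed; what the paper's approach buys is a statement that holds in a full neighbourhood of $\partial D$, not just on it.
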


Now we are ready to prove the main result of this section.

\begin{thm}(= Theorem \ref{thm:main-2})
Let $D$ be a bounded domain in $\mc^n$ with smooth boundary, $ 1\leq q \leq n-1$.
Suppose that for any strictly plurisubharmonic function $\varphi$ of the form: 
	$$
	\varphi=a\|z-z_0\|^2-b, \forall a, b > 0, \forall z_0 \in \mc^n,
	$$ 
	and any $\bar\partial$-closed form $f\in \wedge^{0,q}(\bar{D}) \cap \text{Dom}(\bar\partial^*_{\varphi})$,
	the equation $\bar\partial u=f$ is solvable on $D$ with the estimate
		$$
		\int_{D} |u|^2 e^{-\varphi} d\lambda    \leq \int_{D} \inner{\textup{Levi}_{\varphi}^{-1}f,f } e^{-\varphi }d\lambda    =\frac{1}{aq}\int_D|f|^2e^{-\varphi } d\lambda,
		$$
then $D$ is hyper-$q$-convex.
\end{thm}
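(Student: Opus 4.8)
The plan is to argue by contradiction, producing from a failure of hyper-$q$-convexity a weight $\varphi=a\|z-z_0\|^2-b$ of the prescribed form for which the hypothesized solvability with the sharp estimate cannot hold. So suppose $D$ is not hyper-$q$-convex. By the definition together with Theorem \ref{prop:properties-hyper-q-convex-domain}, there is a point $p_0\in\partial D$ at which some $q$ eigenvalues of the Levi form on the complex tangent space $T^{\mc}_{p_0}\partial D$ have negative sum. Using Lemma \ref{lemma:q-subharmonic-invariant}, I would make a unitary change of coordinates so that at $p_0$ one has $\pa\rho/\pa z_j(p_0)=0$ for $j<n$ and the Levi form restricted to the $z_1,\dots,z_{n-1}$-directions is diagonal with eigenvalues $\lambda_1,\dots,\lambda_{n-1}$; picking indices $i_1<\cdots<i_q\le n-1$ with $\lambda_{i_1}+\cdots+\lambda_{i_q}<0$, set $\xi=d\bar z_{i_1}\wedge\cdots\wedge d\bar z_{i_q}\in\wedge^{0,q}_{p_0}(\bar D)$. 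A short computation then shows that $\xi$ satisfies \eqref{eq:case-dbar-neumann} at $p_0$ and $\inner{\Levi_\rho\xi,\xi}(p_0)=\lambda_{i_1}+\cdots+\lambda_{i_q}<0$.

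The next step is to globalize $\xi$ to a $\bp$-closed form belonging to $\Dom(\bp^*_\varphi)$. Applying Lemma \ref{lem:exist-dbar-closed-form} to each one-form $d\bar z_{i_k}$ (which satisfies \eqref{eq:case-dbar-neumann} at $p_0$ since $i_k<n$) yields functions $h_1,\dots,h_q$ on $\mc^n$ with $\sum_i(\pa\rho/\pa z_i)(\pa h_k/\pa\bar z_i)\equiv0$ and $\bp h_k(p_0)=d\bar z_{i_k}$. Set $\alpha:=\bp h_1\wedge\cdots\wedge\bp h_q\in\wedge^{0,q}(\bar D)$. Then $\bp\alpha=0$ (each factor is $\bp$-exact, hence closed) and $\alpha(p_0)=\xi\neq0$; moreover, since the contraction $\sum_i(\pa\rho/\pa z_i)\,\pa/\pa\bar z_i\,\lrcorner$ is an antiderivation that annihilates each factor $\bp h_k$ on $\partial D$, it annihilates $\alpha$ on $\partial D$, so $\alpha\in\Dom(\bp^*_\varphi)$ for every weight $\varphi$ by \eqref{eq:domain of dbar^*}.

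Now fix $\varphi=a\|z-z_0\|^2-b$ and let $u$ solve $\bp u=\alpha$ with $\|u\|_\varphi^2\le\frac1{aq}\|\alpha\|_\varphi^2$, as granted by the hypothesis applied to $f=\alpha$. Pairing and Cauchy--Schwarz give $\|\alpha\|_\varphi^2=\iinner{\bp u,\alpha}_\varphi=\iinner{u,\bp^*_\varphi\alpha}_\varphi\le\|u\|_\varphi\|\bp^*_\varphi\alpha\|_\varphi$, hence $\|\bp^*_\varphi\alpha\|_\varphi^2\ge aq\|\alpha\|_\varphi^2$. Substituting this into the Morrey--Kohn--H\"ormander identity of Lemma \ref{lem:Bochner-identity-boundary-term-dbar-operator}, and using $\bp\alpha=0$ together with $\iinner{\Levi_\varphi\alpha,\alpha}_\varphi=aq\|\alpha\|_\varphi^2$ (because $\varphi_{i\bar j}=a\delta_{ij}$), the terms $aq\|\alpha\|_\varphi^2$ cancel and we are left with
$$\int_{\partial D}\inner{\Levi_\rho\alpha,\alpha}\,e^{-\varphi}\,dV\ \geq\ -\int_D|\overline{\nabla}\alpha|^2\,e^{-\varphi}\,d\lambda.$$

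The decisive step is the localization, carried out as in the work of Deng, Ning, Wang and Zhou. I would take $z_0=p_0+\eps\,\nu$, with $\nu$ the outward unit normal to $\partial D$ at $p_0$ and $\eps>0$ small and fixed, so that $p_0$ is the unique nearest point of $\bar D$ to $z_0$; then let $a\to\infty$. A standard Laplace/concentration argument shows that the probability measures $e^{-\varphi}\,dV\big/\int_{\partial D}e^{-\varphi}\,dV$ on $\partial D$ converge weakly to the Dirac mass at $p_0$, so the left-hand side above, divided by $\int_{\partial D}e^{-\varphi}\,dV$, tends to $\inner{\Levi_\rho\alpha,\alpha}(p_0)=\inner{\Levi_\rho\xi,\xi}(p_0)<0$. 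On the other hand, comparing the two Gaussian integrals with the half-space model near $p_0$ gives $\int_De^{-\varphi}\,d\lambda\big/\int_{\partial D}e^{-\varphi}\,dV=O\!\big(1/(a\eps)\big)$, and $|\overline{\nabla}\alpha|$ is bounded on $\bar D$, so the right-hand side divided by $\int_{\partial D}e^{-\varphi}\,dV$ tends to $0$. For $a$ large this contradicts the displayed inequality, proving that $D$ must be hyper-$q$-convex. I expect this last comparison to be the main obstacle: one has to verify that the interior term $\|\overline{\nabla}\alpha\|_\varphi^2$ is genuinely negligible against the boundary mass as the weight concentrates, which is exactly why $z_0$ is placed off $\partial D$ (the resulting extra factor $1/(a\eps)$ being what makes the interior term subordinate); the remainder of the argument is formal manipulation of the two identities above.
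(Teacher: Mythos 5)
Your proposal is correct and follows the same overall strategy as the paper: the identical chain Cauchy--Schwarz $\Rightarrow$ $\|\bp^*_\varphi\alpha\|^2_\varphi\geq aq\|\alpha\|^2_\varphi$ $\Rightarrow$ (via Lemma \ref{lem:Bochner-identity-boundary-term-dbar-operator}) the key inequality $\|\overline{\nabla}\alpha\|^2_\varphi+\int_{\partial D}\inner{\Levi_\rho\alpha,\alpha}e^{-\varphi}\,dV\geq 0$, and the identical construction of the test form $\alpha=\bp h_1\wedge\cdots\wedge\bp h_q$ from Lemma \ref{lem:exist-dbar-closed-form}. The one place you genuinely diverge is the concluding localization: the paper centers the weight \emph{at} the boundary point, taking $\varphi_s=s(\|z\|^2-r_0^2)$ with $s\to\infty$, and wins by a dimension count over a small ball $B_{r_0}$ (the surface integral carries $r^{2n-2}$ against the volume integral's $r^{2n-1}$, parametrizing $\partial D\cap B_{r_0}$ via the Morse lemma and shrinking $r_0$ so the boundary term dominates), whereas you place the center at the exterior point $p_0+\eps\nu$ and let $a\to\infty$, extracting the factor $O(1/(a\eps))$ from the linear term $2a\eps s$ in the normal direction. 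Both localizations are sound; yours trades the paper's explicit small-ball/Morse-lemma computation for the unique-nearest-point property and a Laplace concentration argument, and has the mild advantage that the interior-to-boundary mass ratio tends to zero for each fixed $\eps$ rather than requiring a final adjustment of $r_0$ against the constants $M'$, $A'$. The supporting estimates you flag as the main obstacle (concentration of the normalized boundary measure at $p_0$ and the Gaussian ratio $O(1/(a\eps))$) do check out for a smooth bounded domain with $\eps$ below the exterior-ball radius.
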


\begin{proof}
	For any $\alpha \in \wedge^{0,q}(\bar{D})\cap \Ker (\dbar) \cap \Dom (\dbar^*)$, we have
	$$
	\begin{aligned}
	 \langle  \langle \alpha ,f \rangle   \rangle_{\varphi}^2 &= \langle  \langle \alpha ,\bar\partial u \rangle   \rangle_{\varphi}^2=\langle   \langle   \bar\partial_{\varphi}^{*}\alpha ,u \rangle  \rangle_{\varphi}^2 \\
	&\leq \langle   \langle  \bar\partial_{\varphi}^{*}\alpha ,\bar\partial_{\varphi}^{*}\alpha \rangle  \rangle_{\varphi}  \cdot  \langle   \langle   u ,u \rangle  \rangle_{\varphi}.
	\end{aligned}
	$$

	By the estimate of $\|u \|_{\varphi}^2$ and Lemma \ref{lem:Bochner-identity-boundary-term-dbar-operator}, we get
$$
\begin{aligned}
\langle   \langle  \alpha ,f \rangle  \rangle_{\varphi} ^2 \leq&  \frac{1}{aq}\|f\|^2_{\varphi}\cdot\\
&\left(\| \overline{\nabla} \alpha  \|^2_{\varphi}+
\iinner{\textup{Levi}_{\varphi} \alpha, \alpha}_{\varphi}
+\int_{\pa D} \inner{\Levi_{\rho} \alpha, \alpha}e^{-\varphi} dV  \right)
\end{aligned}
$$

We now relate $\alpha$ and $f$ by setting
$$
\alpha=\Levi^{-1}_{\varphi} f,
$$
this substitution yields:
\begin{equation}\label{eq:condition for (0,q)-form-local in proof}
	\| \overline{\nabla} \alpha  \|^2_{\varphi}
+\int_{\pa D} \inner{\text{Levi}_{\rho} \alpha, \alpha}e^{-\varphi} dV\geq 0.
\end{equation}

Next, we argue by contradiction. 

If $D$ has at least one non hyper-$q$-convex boundary point $p_0 \in \partial D$,
by definition, there exists a $(0,q)$-form $\beta \in \wedge^{0,q}_{p_0}(\bar D) $  satisfying (\ref{eq:case-dbar-neumann}) at $p_0$  and a constant $c>0$ such that
$$
\inner{\text{Levi}_{\rho} \beta, \beta} = -2c < 0.
$$
Through a unitary coordinate transformation, we take the coordinates $\{z_1, \dots, z_n\}$ such that:
\begin{itemize}
    \item[(1)] $p_0$ is the origin $O = (0, \dots, 0)$.
    \item[(2)] $\displaystyle -\frac{\pa}{\pa z_n}$ is the inward-pointing normal vector at $p_0$.
    \item[(3)] $\displaystyle \left\{ \frac{\pa}{\pa z_1}, \dots, \frac{\pa}{\pa z_{n-1}} \right\}$ are the eigenvectors corresponding to the eigenvalues
    $$
    k_1 \leq \dots \leq k_{n-1}
    $$
    of the Levi form of $\partial D$ restricted to the holomorphic tangent bundle $T^{1,0}_{p_0}(\partial D)$ at $p_0$.
\end{itemize}
In this coordinate, the $(0,q)$-form $\beta = d\bar{z}_1 \wedge \cdots \wedge d\bar{z}_q \in \wedge^{0,q}_{p_0}(\bar D) $  satisfying (\ref{eq:case-dbar-neumann}) at $p_0$
and
$$
\inner{\text{Levi}_{\rho} \beta, \beta}= \sum_{i=1}^{q}(k_i)\|\xi\|^2 = -2c < 0 \text{ for some } \xi.
$$

According to Lemma \ref{lem:exist-dbar-closed-form}, for any $1 \leq i \leq q$, take $\alpha_i \in \wedge^{0,1}(\bar{D}) \cap \text{Dom}(\bar{\partial}^*) \cap \text{Ker}(\bar{\partial})$ such that $\alpha_i(p_0) = d\bar{z}_i$. Since
$$
T_{\rho}(\alpha_1 \wedge \cdots \wedge \alpha_q) = \sum_{i=1}^{q}( \alpha_1 \wedge \cdots \wedge T_{\rho}\alpha_i \wedge \cdots \wedge \alpha_q) = 0,
$$
where 
$$
T_{\rho}=\sum^{n}_{i=1} \frac{\pa \rho}{\pa z_{i}} \frac{\pa }{\pa \bar{z}_{i}} \lrcorner: \wedge^{0,i}(\bar{D}) \ra \wedge^{0,i-1}(\bar{D}), \;\;\; \forall 1 \leq i \leq n, 
$$
there exists a closed $(0,q)$-form $\alpha = \alpha_1 \wedge \cdots \wedge \alpha_q \in \wedge^{0,q}(\bar{D}) \cap \text{Dom}(\bar{\partial}^*) \cap \text{Ker}(\bar{\partial})$ and a constant $0 < r_0 \ll 1$ such that
$$
\inner{\text{Levi}_{\rho} \alpha, \alpha} (z) < -c
$$
for any $z \in B_{r_0} \cap \partial D$, where $B_{r_0} = \{z \in \mathbb{C}^n \mid \|z - p_0\| < r_0\}$.

For any $s > 0$, let $\varphi_s(x) = s(\|z\|^2 - r_0^2)$,
replacing $\varphi_s$ in \eqref{eq:condition for (0,q)-form-local in proof}, 
we have
\begin{equation}\label{eq:condition for (0,q)-form scaling-local in proof}
\int_{D} \|\bar{\nabla} \alpha \|^2 e^{-\varphi_s} d\lambda
 + \int_{\partial D} \inner{\text{Levi}_{\rho} \alpha, \alpha} e^{-\varphi_s} d\lambda \geq 0,
\end{equation}
which holds for any $s > 0$.

Notice that when $z \in \mathbb{C}^n \setminus \overline{B_{r_0}}$ and $s \to +\infty$, we have $e^{-\varphi_s(z)} \to 0$. At the same time, there exists a constant $M > 0$ such that
$$
\|\bar{\nabla} \alpha(z) \|^2 \leq M
$$
for any $z \in \bar{\Omega}$ since $D$ is bounded. 
Therefore, we obtain:
\begin{equation}\label{eq:key inequality-local in proof}
\liminf_{s \to +\infty} \left( M \int_{D \cap B_{r_0}} e^{-\varphi_s} d\lambda - c \int_{\partial D \cap B_{r_0}} e^{-\varphi_s}dV \right) \geq 0.
\end{equation}

Noticing that the real dimension of $D \cap B_{r_0}$ is $2n$ and that the real dimension of $\partial D \cap B_{r_0}$ is $2n-1$.

Next, we estimate each term in \eqref{eq:key inequality-local in proof}. For the first term, it is easy to see that there exists a constant \( M' > 0 \) such that:
$$
M \int_{B^{2n}_r \cap D} e^{-s \|x\|^2}d\lambda = M' \int_0^{r_0} r^{2n-1} e^{-s(r^2 - r_0^2)} dr.
$$
For the second term, we need a more detailed discussion.

By assumption, there exists a smooth function \( g(x_1, \cdots, x_{2n-1}) \) defined on \( \mathbb{R}^{2n-1} \) such that, in a neighborhood of \( \partial D \), the boundary is represented by
$$
g(x_1, \cdots, x_{2n-1}) - x_n = 0,
$$
with \( g(0, \cdots, 0) = 0 \).

Since \( \nabla \rho(O) = -\frac{\partial}{\partial x_{2n}} \), we have \( \frac{\partial g}{\partial x_i}(0, \cdots, 0) = 0 \) for \( i = 1, \cdots, 2n-1  \). 

In a neighborhood of \( p_0 \), we choose local coordinates \( (x_1, \cdots, x_{2n-1}) \) on \( \partial D \), then near \( x_0 \), the function
$$
\|x\|^2 \bigg|_{\partial D} = x_1^2 + \cdots + x_{2n-1}^2 + g(x_1, \cdots, x_{2n-1})^2
$$
has a non-degenerate critical point of index 0. By Morse's lemma, in a neighborhood of \( x_0 \), there exist local coordinates \( (y_1, \cdots, y_{2n-1}) \) such that
$$
\|x\|^2 \bigg|_{\partial D} = y_1^2 + \cdots + y_{2n-1}^2.
$$
Therefore, locally,
$$
\begin{aligned}
\sigma : B^{2n-1}_{r_0} &\to B^{2n}_{r_0} \cap \partial D \\
(x_1, \cdots, x_{2n-1}) &\mapsto (y_1, \cdots, y_{2n-1})
\end{aligned}
$$
is a homeomorphism. 
Hence, there exists a constant \( A > 0 \) such that:
\begin{equation*}
\begin{split}
\int_{B^{2n}_{r_0} \cap \partial D} e^{-s \|x\|^2}d\lambda &= A \int_{B^{2n-1}_{r_0}} e^{-s(x_1^2 + \cdots + x_{2n-1}^2 - r_0^2)} d\mu \\
&= A' \int_0^{r_0} r^{2n-2} e^{-s(r^2 - r_0^2)} dr \\
&\geq A' r_0^{-1} \int_0^{r_0} r^{2n-1} e^{-s(r^2 - r_0^2)} dr.
\end{split}
\end{equation*}
Choosing \( r_0 \) small enough such that \( cA'r_0^{-1} > M' + 1 \), we then have
$$
M \int_{D \cap B_{r_0}} e^{-\varphi_s}d\lambda
- c \int_{\partial D \cap B_{r_0}} e^{-\varphi_s} dV\leq -\int_0^{r_0} r^{2n-1} e^{-s(r^2 - r_0^2)} dr.
$$
As \( s \to +\infty \), this asymptotic behavior implies that the right-hand side of the above tends to \( -\infty \), contradicting the non-negativity condition in \eqref{eq:key inequality-local in proof}, this contradiction completes the proof.
\end{proof}

\section{The Union Problem for the hyper-$q$-convex domain}\label{sec:proof-union}

We prove Theorem \ref{thm:main-1} in this section.
Let us recall the notable H\"ormander $L^2$ existence theorem for the hyper-$q$-convex domain. 
\begin{thm}[\cite{Hor65},\cite{Ji-Tan-Yu-2015}]
	\label{thml2existence}
  Set $1 \leq q \leq n-1$, let $D$ be a hyper-$q$-convex domain in $\mc^n$. Assume that $\varphi$ is a smooth strictly plurisubharmonic function on $\mc^n$,  then for any $f\in L^2_{(0,q)}(D,\varphi)\cap\Ker (\dbar)$ satisfying with
  \begin{align*}
              \int_D \inner{\Levi_{\varphi}^{-1}f,f}_{\varphi}  d\lambda   < +\infty,
  \end{align*} there is a
  $u\in L^2_{(0,q-1)}(D,\varphi)$ such that $\dbar u=f$
  and
   $$
   \int_D|u|^2_{\varphi}  d\lambda
   \leq
   \int_D \inner{\Levi_{\varphi}^{-1}f,f}_{\varphi}  d\lambda.
   $$
\end{thm}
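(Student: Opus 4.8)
\textbf{Proof proposal for Theorem \ref{thml2existence} (Hörmander $L^2$ existence on hyper-$q$-convex domains).}

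The plan is to run the standard Hörmander--Kohn--Morrey machinery on the exhaustion of $D$ by smooth sublevel sets of the defining function, using at each step the Bochner identity with boundary term from Lemma \ref{lem:Bochner-identity-boundary-term-dbar-operator} together with the sign of the boundary integrand guaranteed by the hyper-$q$-convexity (Theorem \ref{prop:properties-hyper-q-convex-domain}(iii)). First I would choose a smooth defining function $\rho$ for $D$ and, for each small $\eps>0$, set $D_\eps=\{\rho<-\eps\}$; for a.e.\ $\eps$ this is a smoothly bounded, relatively compact subdomain whose Levi form on complex tangent directions is (to leading order as $\eps\to 0$) that of $\partial D$, so $D_\eps$ is still hyper-$q$-convex up to an error that can be absorbed. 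On $D_\eps$ the space $\wedge^{0,q}(\overline{D_\eps})\cap\mathrm{Dom}(\bp^*_\varphi)$ is dense in $\mathrm{Dom}(\bp^*_\varphi)\cap\mathrm{Dom}(\bp)$ in the graph norm (Friedrichs-type density, as $\partial D_\eps$ is smooth), so it suffices to prove the a priori estimate
\[
|\iinner{f,\alpha}_\varphi|^2\leq \Big(\int_{D_\eps}\inner{\Levi_\varphi^{-1}f,f}_\varphi\,d\lambda\Big)\cdot\big(\|\bp\alpha\|_\varphi^2+\|\bp^*_\varphi\alpha\|_\varphi^2\big)
\]
for all such $\alpha$, with $f$ $\bp$-closed on $D_\eps$.

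The key step is the estimate itself. For $\alpha\in\wedge^{0,q}(\overline{D_\eps})\cap\mathrm{Dom}(\bp^*_\varphi)$ the identity of Lemma \ref{lem:Bochner-identity-boundary-term-dbar-operator} reads
\[
\|\bp\alpha\|_\varphi^2+\|\bp^*_\varphi\alpha\|_\varphi^2=\|\overline\nabla\alpha\|_\varphi^2+\iinner{\Levi_\varphi\alpha,\alpha}_\varphi+\int_{\partial D_\eps}\inner{\Levi_\rho\alpha,\alpha}e^{-\varphi}\,dV.
\]
Since $\varphi$ is strictly plurisubharmonic, $\Levi_\varphi$ is a positive definite Hermitian endomorphism of $\wedge^{0,q}$, and the boundary term is $\geq 0$ because $\alpha\in\mathrm{Dom}(\bp^*_\varphi)$ forces the tangency condition \eqref{eq:case-dbar-neumann} on $\partial D_\eps$, on which hyper-$q$-convexity gives $\inner{\Levi_\rho\alpha,\alpha}\geq 0$ pointwise (Theorem \ref{prop:properties-hyper-q-convex-domain}(iii), together with the unitary invariance of Lemma \ref{lemma:q-subharmonic-invariant} to diagonalize fiberwise). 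Dropping $\|\overline\nabla\alpha\|_\varphi^2$ and the boundary term yields $\iinner{\Levi_\varphi\alpha,\alpha}_\varphi\leq\|\bp\alpha\|_\varphi^2+\|\bp^*_\varphi\alpha\|_\varphi^2$. Then, with $f$ $\bp$-closed, Cauchy--Schwarz in the inner product $\iinner{\Levi_\varphi\,\cdot,\cdot}_\varphi$ gives
\[
|\iinner{f,\alpha}_\varphi|^2=|\iinner{\Levi_\varphi^{-1}f,\alpha}_{\Levi_\varphi,\varphi}|^2\leq\iinner{\Levi_\varphi^{-1}f,f}_\varphi\cdot\iinner{\Levi_\varphi\alpha,\alpha}_\varphi\leq\Big(\int_{D_\eps}\inner{\Levi_\varphi^{-1}f,f}_\varphi d\lambda\Big)\big(\|\bp\alpha\|_\varphi^2+\|\bp^*_\varphi\alpha\|_\varphi^2\big);
\]
on the orthogonal complement of $\mathrm{Ker}(\bp)$ one has $\bp^*_\varphi\alpha=0$ and $\iinner{f,\alpha}_\varphi=0$ since $f\in\mathrm{Ker}(\bp)$, so the bound persists, and the Hahn--Banach/Riesz duality argument produces $u_\eps\in L^2_{(0,q-1)}(D_\eps,\varphi)$ with $\bp u_\eps=f$ and $\|u_\eps\|_\varphi^2\leq\int_{D_\eps}\inner{\Levi_\varphi^{-1}f,f}_\varphi\,d\lambda\leq\int_D\inner{\Levi_\varphi^{-1}f,f}_\varphi\,d\lambda$.

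Finally I would pass to the limit $\eps\to 0$: the solutions $u_\eps$ (extended by zero) are uniformly bounded in $L^2_{(0,q-1)}(D,\varphi)$, so a weak-$*$ limit $u$ exists along a subsequence, still satisfies $\bp u=f$ in the distributional sense on $D$, and inherits the norm bound by lower semicontinuity. The main obstacle is the geometric bookkeeping at the boundary: one must check that the sublevel sets $D_\eps$ can be taken smooth for a full-measure set of $\eps$ (Sard), that their defining functions $\rho+\eps$ still yield a nonnegative boundary form — here hyper-$q$-convexity of $\partial D$ is an open-type condition in the normal direction only up to lower-order terms, so one may need to first replace $\rho$ by a function that is genuinely hyper-$q$-convex in a full neighborhood of $\partial D$ (e.g.\ by adding a small convex correction $C\|z\|^2$ and rescaling, which does not affect the limiting statement since $\varphi$ is arbitrary strictly psh) — and that the density of forms smooth up to the boundary in the graph norm holds on each $D_\eps$. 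All of these are classical once the smooth approximation is set up correctly, so I expect the proof to be essentially a careful assembly of known ingredients rather than requiring a new idea.
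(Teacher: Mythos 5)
The paper does not actually prove this statement --- it is quoted from H\"ormander and Ji--Tan--Yu --- and the standard proof is precisely the a priori estimate at the heart of your proposal: the Morrey--Kohn--H\"ormander identity of Lemma \ref{lem:Bochner-identity-boundary-term-dbar-operator}, nonnegativity of the boundary term via Theorem \ref{prop:properties-hyper-q-convex-domain}(iii), the Cauchy--Schwarz step weighted by $\Levi_\varphi^{-1}$, and Riesz/Hahn--Banach duality. That core of your argument is correct, including the observation that $\bar\partial^*_\varphi$ annihilates $(\Ker\bar\partial)^\perp$ so the estimate extends from $\Ker(\bar\partial)$ to all of $\mathrm{Dom}(\bar\partial^*_\varphi)\cap\mathrm{Dom}(\bar\partial)$.

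The genuine gap is the exhaustion by $D_\eps=\{\rho<-\eps\}$. Weak hyper-$q$-convexity is a closed, non-open condition: the relevant eigenvalue sums may vanish identically on $\partial D$, and on the inner level sets $\{\rho=-\eps\}$ they acquire corrections of size $O(\eps)$ with no definite sign, so $D_\eps$ need not be hyper-$q$-convex for any $\eps>0$ --- ``to leading order'' is not an absorbable error when the leading order is zero (already for $q=1$, interior level sets of a defining function of a weakly pseudoconvex domain need not be pseudoconvex). Your proposed repair, replacing $\rho$ by $\rho+C\|z\|^2$, changes the domain whose sublevel sets you are taking, so the resulting subdomains no longer exhaust $D$ compatibly with the original boundary geometry without a further (nontrivial) approximation argument. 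The fix is simply to delete the exhaustion: in this paper a hyper-$q$-convex domain is by definition bounded with smooth boundary, so H\"ormander's density lemma (forms smooth up to the boundary and in $\mathrm{Dom}(\bar\partial^*_\varphi)$ are graph-norm dense in $\mathrm{Dom}(\bar\partial)\cap\mathrm{Dom}(\bar\partial^*_\varphi)$) applies directly on $D$; the a priori estimate then holds on $D$ itself, and duality produces $u$ on $D$ with the stated bound in one step, with no limit $\eps\to 0$ and no need to perturb $\rho$.
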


Now we are ready to prove Theorem \ref{thm:main-1}.

\begin{thm}[= Theorem \ref{thm:main-1}]
	Let $D$ be a bounded domain in $\mc^n$ with smooth boundary. 
	Let $\{D_j\}$ be a sequence of open subsets of $D$ with $D_j\subset D_{j+1} $ and $\bigcup_jD_j=D$.  Assume that all $D_j$ are hyper-$q$-convex, 
then $D$ is hyper-$q$-convex.	
\end{thm}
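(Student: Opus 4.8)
The plan is to derive Theorem \ref{thm:main-1} from the characterization in Theorem \ref{thm:main-2}: it suffices to check that for every weight of the form $\varphi=a\|z-z_0\|^2-b$ ($a,b>0$, $z_0\in\mc^n$) and every $\dbar$-closed $f\in\wedge^{0,q}(\bar D)\cap\Dom(\dbar^*_\varphi)$, the equation $\dbar u=f$ is solvable on $D$ with $\|u\|^2_\varphi\le\frac1{aq}\|f\|^2_\varphi$. For such a $\varphi$ one has $\varphi_{i\bar j}=a\delta_{ij}$, so the formula for $\inner{\Levi_\varphi\alpha,\alpha}$ recalled in Lemma \ref{lem:Bochner-identity-boundary-term-dbar-operator} gives $\inner{\Levi_\varphi\alpha,\alpha}=aq\,|\alpha|^2$ for every $(0,q)$-form $\alpha$, i.e. $\Levi_\varphi=aq\cdot\Id$ on $(0,q)$-forms; hence $\inner{\Levi_\varphi^{-1}f,f}=\frac1{aq}|f|^2$ and the target estimate is exactly the one appearing in Theorem \ref{thml2existence}.

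First I would solve the equation on each member of the exhaustion. Fix $\varphi$ and $f$ as above. Because $f\in\wedge^{0,q}(\bar D)$ and $D$ is bounded, $f$ is bounded; hence for every $j$ the restriction $f|_{D_j}$ belongs to $L^2_{(0,q)}(D_j,\varphi)\cap\Ker(\dbar)$ and
\[
\int_{D_j}\inner{\Levi_\varphi^{-1}f,f}_\varphi\,d\lambda=\frac1{aq}\,\|f\|^2_{L^2(D_j,\varphi)}\le\frac1{aq}\,\|f\|^2_{L^2(D,\varphi)}<\infty .
\]
Since $\varphi$ is smooth and strictly plurisubharmonic on all of $\mc^n$ and $D_j$ is hyper-$q$-convex, Theorem \ref{thml2existence} produces $u_j\in L^2_{(0,q-1)}(D_j,\varphi)$ with $\dbar u_j=f$ on $D_j$ and $\|u_j\|^2_{L^2(D_j,\varphi)}\le\frac1{aq}\|f\|^2_{L^2(D,\varphi)}=:C$, a bound independent of $j$.

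Then I would pass to the limit by the usual weak-compactness and diagonal argument. For each fixed $k$ the family $\{u_j|_{D_k}\}_{j\ge k}$ is bounded by $\sqrt C$ in the Hilbert space $L^2_{(0,q-1)}(D_k,\varphi)$, so a diagonal extraction gives one subsequence $(u_{j_l})$ that converges weakly in $L^2_{(0,q-1)}(D_k,\varphi)$ for every $k$; the limits are mutually compatible on $D_k\subset D_{k'}$ and glue to a form $u$ on $D=\bigcup_kD_k$ with $u|_{D_k}\in L^2_{(0,q-1)}(D_k,\varphi)$. Weak lower semicontinuity of the norm on each $D_k$ gives $\|u\|_{L^2(D_k,\varphi)}\le\liminf_l\|u_{j_l}\|_{L^2(D_{j_l},\varphi)}\le\sqrt C$, and letting $k\to\infty$ (monotone convergence) yields $\|u\|^2_\varphi\le C=\frac1{aq}\|f\|^2_\varphi$, so $u\in L^2_{(0,q-1)}(D,\varphi)$. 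Finally, for any $\psi\in\wedge^{0,q}_c(D)$ pick $k$ with $\supp\psi\subset D_k$; then for $j_l\ge k$,
\[
\iinner{u,\dbar^*_\varphi\psi}_\varphi=\lim_l\iinner{u_{j_l},\dbar^*_\varphi\psi}_\varphi=\lim_l\iinner{\dbar u_{j_l},\psi}_\varphi=\iinner{f,\psi}_\varphi ,
\]
so $\dbar u=f$ on $D$ in the weak sense. This verifies the hypothesis of Theorem \ref{thm:main-2}, and therefore $D$ is hyper-$q$-convex.

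I do not expect a genuine obstacle here: the argument is a routine functional-analytic gluing. The two points that require a little care are that the bound $C$ must be \emph{uniform} in $j$, which is precisely why the quantitative $L^2$ estimate in Theorem \ref{thml2existence} (rather than mere solvability) is used, as stressed in the Remark following Theorem \ref{thm:main-2}; and the verification that the weak limit still satisfies $\dbar u=f$, which follows from testing against compactly supported forms as above.
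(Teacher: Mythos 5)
Your proposal is correct and follows essentially the same route as the paper: solve $\dbar u_j=f$ on each $D_j$ via the H\"ormander-type existence theorem for hyper-$q$-convex domains with the uniform estimate, pass to a weak limit to get a solution on $D$ with the same bound, and invoke Theorem \ref{thm:main-2}. Your version is in fact slightly more careful than the paper's (the explicit diagonal extraction over the exhaustion and the verification of $\dbar u=f$ by testing against compactly supported forms), but the argument is the same.
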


\begin{proof}
	Let $\varphi$ be a smooth strictly plurisubharmonic function on $\mc^n$ and $f \in \wedge^{0,q}(\bar{D}) \cap \text{Dom}(\bar\partial^*_{\varphi})\cap \Ker(\dbar)$ satisfies
   $$
   \int_{D} \langle \Levi_\varphi^{-1} f, f\rangle e^{-\varphi} d\lambda<+\infty
   $$
  Then by Theorem  \ref{thml2existence}, there exists $u_j\in L^2_{(0,q-1)}(D_j,\varphi)$ such that $\dbar u_j=f$ on $D_j$ with the estimate
   $$
   \int_{D_j}|u_j|^2 e^{-\varphi} d\lambda
   \leq
   \int_{D_j} \langle \Levi_\varphi^{-1} f, f\rangle e^{-\varphi} d\lambda\leq
   \int_D \langle \Levi_\varphi^{-1} f, f\rangle e^{-\varphi} d\lambda.
   $$
 This means $\{u_j\}$ is a bounded subset in the Hilbert space $L^2_{(0,q-1)}(D,\varphi )$.
   Hence there is a subsequence $\{u_j\}$, assume to be $\{u_j\}$ itself without loss of generality, that  converges weakly in $L^2_{(0,q-1)}(D,\varphi)$ to some $u$. Note that we also have $\dbar u=f$
   in the sense of distribution. And we have 
   \begin{equation*}
	   \int_{D}^{} |u|^2 e^{-\varphi} d\lambda
	   \le \limsup_{j\to +\infty}\int_{D}^{} |u_{j}|^2 e^{-\varphi} d\lambda 
	   \leq 
	   \int_D \langle \Levi_\varphi^{-1} f, f\rangle e^{-\varphi} d\lambda.
	   \end{equation*}
   Thus, $$
   \int_D|u|^2 e^{-\varphi} d\lambda
   \leq
   \int_D \langle \Levi_\varphi ^{-1} f, f\rangle e^{-\varphi} d\lambda.
   $$
Then by Theorem \ref{thm:main-2},
$D$ is hyper-$q$-convex.
\end{proof}

\section{Convex Analogy}\label{sec:convex-analogy}

\subsection{The weighted $d$-complex}

The coordinate on $\mr^n$ will be denoted by $x=(x_1,\cdots, x_n)$, $(j=1,\cdots, n).$
We assume that $D$ is a bounded domain in $\mr^n$ with smooth boundary.
Then there exits a smooth function $\rho:\mr^n \ra \mr$  such that
$$
D=\left\{x \in \mr^n; \rho(x)<0 \right\}
$$
and $\nabla \rho|_{\pa D} \neq 0$, where
$$\nabla\rho=\sum_{j=1}^n\frac{\partial\rho}{\partial x_j}\frac{\partial}{\partial x_j}$$
is the gradient of $\rho$.
We take a normalization such that $|\nabla\rho|\equiv 1$ on $\partial D$.

Let $D$ be a domain in $\mr^n$.
We denote by $\wedge^{q}(D)$ the space of smooth  $q$-forms on $D$, for any $0\leq q\leq n$ (0-forms are just smooth functions), and
$\wedge^{q}_c(D)$ the elements in
$\wedge^{q}(D)$ with compact support.
Let $\varphi$ be a real-valued continuous function on $D$.
Given $\alpha=\sum_I\alpha_Id  x_I, \beta=\sum_I\beta_Idx_I\in \wedge^{q}(D)$, we define the products of $\alpha$ and $\beta$ and the corresponding norm with respect to $\varphi$ as follows:
$$
\begin{aligned}
	\inner{\alpha, \beta}_{\varphi} &= \sum_I \alpha_I \cdot \beta _I e^{-\varphi},
	|\alpha|^2_{\varphi}=\inner{\alpha,\alpha}_{\varphi}. \\
	\iinner{\alpha, \beta}_{\varphi} &=\int_D \inner{\alpha,\beta}e^{-\varphi} d\lambda,
	\|\alpha \|^2_{\varphi}=\iinner{\alpha,\alpha}_{\varphi}.
\end{aligned}
$$
Let $L^2_{q} (D,\varphi)$ be the completion of $\wedge^{q}_c(D)$ with respect to the inner product $\iinner{\cdot,\cdot}_{\varphi}$,
then $L^2_{q}(D,\varphi)$ is a Hilbert space and
$d:L^2_{q}(D,\varphi)\ra L^2_{q+1}(D,\varphi)$ is a  closed and densely defined operator.
Let $d_\varphi^{*}$ be the Hilbert adjoint of $d$.
For convenience,
we denote $L^2 (D, \varphi):=L^2_{0} (D, \varphi)$.

Since
$$
d \circ d =0,
$$
we can also define the weighted $d$-complex as follows
$$
0 \ra L^2_{0}(D,\varphi) \xrightarrow[]{d} L^2_{1}(D,\varphi) \xrightarrow[]{d} L^2_{2}(D,\varphi) 
\xrightarrow[]{d} \cdots \xrightarrow[]{d} L^2_{n}(D,\varphi) \ra 0.
$$

One can show that for $\alpha\in \wedge^{q}(\bar{D})$ that
\begin{equation}\label{eq:domain of d^*}
\alpha\in \text{Dom}(d^*_\varphi)\Longleftrightarrow  \alpha \in \Ker(\sum\frac{\partial\rho}{\partial x_j}\frac{\partial}{\partial x_j} \lrcorner ) \; on \; \pa D.
\end{equation}
Note that the condition on the right-hand side of the above formula is independent of the weight $\varphi$.

We recall the following Morrey-Kohn-H\"ormander identity for $d$-operator with boundary term.
\begin{lem}\label{lem:Bochner-identity-boundary-term-d-operator}
For $\alpha=\sum_{|I|=q}^\prime\alpha_I dx_I \in \wedge^{q}(\bar{D}) \cap \Dom(d_\varphi^*) $, we have
\begin{equation}
\int_{D} (|d^{*}_{\varphi } \alpha |^2+|\bp \alpha |^2)e^{-\varphi}d \lambda=\| \nabla \alpha  \|^2_{\varphi}+
\iinner{\Hess_{\varphi} \alpha, \alpha}_{\varphi}
+\int_{\pa D} \inner{\Hess_{\rho} \alpha, \alpha}e^{-\varphi} d\lambda
\end{equation}
where $d\lambda$ denotes the Lebsgue measures on $\mr^n$, and $$\|  {\nabla} \alpha  \|^2_{\varphi}=\sum \int_D |\frac{\partial\alpha_I}{\partial  x_j}|^2_\varphi,$$
$$\langle \Hess_\rho\alpha,\alpha\rangle=\sum_{i,j}\sum_{|K|=q-1}^{\prime}\rho_{i j}\alpha_{iK} {\alpha_{jK}}.$$
\end{lem}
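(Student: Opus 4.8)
The plan is to run the classical Morrey--Kohn--H\"ormander integration-by-parts argument in the real, weighted de Rham setting, in exact parallel with the $\dbar$-case recorded in Lemma~\ref{lem:Bochner-identity-boundary-term-dbar-operator}. The first step is to record the explicit shape of the adjoint: for $\alpha=\sum_{|I|=q}^{\prime}\alpha_I\,dx_I\in\wedge^q(\bar D)$, a single integration by parts against a compactly supported test form gives
\[
d_\varphi^*\alpha=-\sum_{j}\sum_{|K|=q-1}^{\prime}\delta_j\alpha_{jK}\,dx_K,\qquad \delta_j:=\pa_j-\varphi_j ,
\]
and shows that this integration by parts produces no boundary contribution precisely when $\sum_j\rho_j\alpha_{jK}=0$ on $\pa D$ for every $K$, i.e.\ when $\alpha\in\Dom(d_\varphi^*)$; this is the content of \eqref{eq:domain of d^*}, which we assume throughout.

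Next I would expand the two squared norms componentwise. Writing $d\alpha=\sum_{j}\sum_{|I|=q}^{\prime}\pa_j\alpha_I\,dx_j\wedge dx_I$ and computing $|d\alpha|^2$, one gets a ``diagonal'' part $\sum_j\sum_{|I|=q}^{\prime}|\pa_j\alpha_I|^2$ plus an ``exchange'' part built from products $(\pa_j\alpha_{kK})(\pa_k\alpha_{jK})$ with $j\neq k$, while $|d_\varphi^*\alpha|^2=\sum_{|K|=q-1}^{\prime}\big|\sum_j\delta_j\alpha_{jK}\big|^2$ contributes the terms $(\delta_j\alpha_{jK})(\delta_k\alpha_{kK})$. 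The diagonal part already integrates, against $e^{-\varphi}$, to $\|\nabla\alpha\|_\varphi^2$. Integrating the exchange terms by parts --- moving $\pa_j$ off $(\pa_j\alpha_{kK})(\pa_k\alpha_{jK})e^{-\varphi}$ and then rearranging --- produces three kinds of pieces: second-order pieces of the form $(\pa_j\pa_k\alpha_{jK})\,\alpha_{kK}$ that cancel against the corresponding pieces from $|d_\varphi^*\alpha|^2$; an interior piece in which a derivative hits $e^{-\varphi}$, assembling to $\sum_{i,j}\sum_{|K|=q-1}^{\prime}\varphi_{ij}\alpha_{iK}\alpha_{jK}=\inner{\Hess_\varphi\alpha,\alpha}$ integrated against $e^{-\varphi}$; and a boundary integral over $\pa D$. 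Collecting the interior contributions gives $\|\nabla\alpha\|_\varphi^2+\iinner{\Hess_\varphi\alpha,\alpha}_\varphi$, which is all of the asserted identity except the boundary term.

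The remaining, and decisive, step is to identify the boundary integral as $\int_{\pa D}\inner{\Hess_\rho\alpha,\alpha}e^{-\varphi}\,d\lambda$. Using $|\nabla\rho|\equiv1$ on $\pa D$, the outward normal is $\nu=\sum_j\rho_j\pa_j$, so each coordinate derivative occurring in the boundary integrand is split into its normal component along $\nu$ and its tangential components; the normal components are eliminated by the boundary relation $\sum_j\rho_j\alpha_{jK}=0$, and the tangential ones are handled by differentiating that same relation: for a tangent field $T$ one has $\sum_j(T\rho_j)\alpha_{jK}=-\sum_j\rho_j(T\alpha_{jK})$ with $T\rho_j=\sum_i\rho_{ij}T_i$, which trades tangential derivatives of $\alpha$ for the Hessian $\rho_{ij}$. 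After this substitution the genuine derivatives of $\alpha$ drop out and the boundary integrand collapses to $\sum_{i,j}\sum_{|K|=q-1}^{\prime}\rho_{ij}\alpha_{iK}\alpha_{jK}=\inner{\Hess_\rho\alpha,\alpha}$ with the correct sign. I expect this to be the main obstacle: keeping careful track of the wedge-product signs and of which index pairs survive, so that the boundary terms recombine exactly into $\inner{\Hess_\rho\alpha,\alpha}$; the rest is a faithful transcription of the $\dbar$-computation to the real, untwisted situation. (Equivalently, one may derive the identity from the weighted Bochner--Weitzenb\"ock formula $d\,d_\varphi^*+d_\varphi^*\,d=\nabla_\varphi^*\nabla+\Hess\varphi$ on $\wedge^q(\bar D)$ --- the curvature of $\mr^n$ vanishing --- followed by Green's formula on $D$, with the boundary term appearing as the Reilly-type second-fundamental-form contribution.)
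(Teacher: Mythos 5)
The paper does not prove this lemma at all: it is simply ``recalled'' as the known weighted Morrey--Kohn--H\"ormander identity for the $d$-complex (the real-variable analogue proved, e.g., in \cite{Deng-Zhang} and \cite{Ji-Liu-Yu-2014}), so there is no in-paper argument to compare against. Your proposal correctly reconstructs that standard proof: the adjoint formula $d_\varphi^*\alpha=-\sum_{K}'\big(\sum_j\delta_j\alpha_{jK}\big)dx_K$ with the boundary condition $\sum_j\rho_j\alpha_{jK}=0$ on $\pa D$, the componentwise expansion of $|d\alpha|^2+|d_\varphi^*\alpha|^2$, double integration by parts in which the commutators $[\delta_k,\pa_j]=\varphi_{jk}$ assemble into $\iinner{\Hess_\varphi\alpha,\alpha}_\varphi$, one family of boundary terms killed by the $\Dom(d_\varphi^*)$ condition, and the surviving boundary term turned into $\inner{\Hess_\rho\alpha,\alpha}$ by differentiating the relation $\sum_k\rho_k\alpha_{kK}=0$ along the (tangential, for each fixed $K$) field $\sum_j\alpha_{jK}\pa_j$. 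Apart from harmless looseness in describing the ``exchange'' terms, the sketch is sound and is exactly the argument the paper implicitly relies on.
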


\subsection{Proof of Theorem \ref{thm:main-3}}

First, we consider the following construction problem. Note that $\xi=\sum_I^{\prime}\xi_Idx_I\in\wedge^{i}_{p_0}(\partial D)$ means $\xi\in\wedge^{i}_{p_0}(\bar D)$ satisfying
$$\sum \frac{\partial\rho}{\partial x_j}\xi_{jK}=0 \text{ for all } |K|=q-1 \text{ at } p_0.$$

For any boundary point \( p_0 \) of \( D \), and for any \( 0 \leq i \leq n \), given any \( i \)-form \( \xi \in \wedge^{i}_{p_0}(\partial D) \)  at \( p_0 \), is there a form \( \alpha \in \wedge^{i}(\bar{D}) \cap \text{Dom}(d^*) \) such that $\alpha(z) = \xi $ and $d \alpha = 0$ ?

It's clear that, if
\[
\alpha_1 \in \wedge^i(\bar{D}) \cap \text{Dom}(d^*) \cap \Ker(d), \quad \alpha_1(p_0) = \xi_1
\]
and
\[
\alpha_2 \in \wedge^j(\bar{D}) \cap \text{Dom}(d^*) \cap \Ker(d), \quad \alpha_2(p_0) = \xi_2,
\]
then
\[
\alpha_1 \wedge \alpha_2 \in \wedge^{i+j}(\bar{D}) \cap \text{Dom}(d^*) \cap \Ker(d),
\]
and
\[
\alpha_1 \wedge \alpha_2(p_0) = \xi_1 \wedge \xi_2 \in \wedge^{i+j}_{p_0}(\partial D).
\]

Next, we show that the problem has a solution for \( i = 1 \).

\begin{lem}[\cite{Deng-Zhang}]\label{lem:exist-d-closed-form}
For any boundary point \( p_0 \) of \( D \), given any 1-form
\[
\xi = \sum \xi_i dx_i \in \wedge^1_{p_0}(\partial D)
\]
at \( p_0 \), there exists a function \( u \) on \( \mathbb{R}^n \) such that
$$
\sum_{i=1}^n \frac{\partial \rho}{\partial x_i} \frac{\partial u}{\partial x_i} = 0
$$
and
$$
du(p_0) = \sum \xi_i dx_i.
$$
\end{lem}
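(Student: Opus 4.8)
The plan is to build $u$ by hand: prescribe a boundary function with the correct tangential differential at $p_0$, pull it back to a tubular neighbourhood of $\partial D$ via the nearest-point projection, and then cut off to get a function on all of $\mathbb{R}^n$. (As with \eqref{eq:domain of d^*}, the relevant content of the condition $\sum_i\frac{\partial\rho}{\partial x_i}\frac{\partial u}{\partial x_i}=0$ is that it holds on $\partial D$, equivalently $du\in\mathrm{Dom}(d^*_\varphi)$, which depends neither on $\varphi$ nor on the choice of defining function.) First note that $\xi=\sum\xi_i dx_i\in\wedge^1_{p_0}(\partial D)$ means $\sum_i\frac{\partial\rho}{\partial x_i}(p_0)\,\xi_i=0$, i.e.\ $\xi$ annihilates the Euclidean normal direction $\nabla\rho(p_0)$; hence $\xi$ is the unique extension to $T^*_{p_0}\mathbb{R}^n$, killing $\nabla\rho(p_0)$, of the covector $\xi|_{T_{p_0}\partial D}\in T^*_{p_0}\partial D$.

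First I would produce $v\in C^\infty(\partial D)$ with $dv(p_0)=\xi|_{T_{p_0}\partial D}$: take a chart $\psi\colon W\to\mathbb{R}^{n-1}$ of $\partial D$ with $\psi(p_0)=0$, let $\ell\colon\mathbb{R}^{n-1}\to\mathbb{R}$ be the linear functional with $\ell\circ d\psi_{p_0}=\xi|_{T_{p_0}\partial D}$, and set $v=(\ell\circ\psi)\cdot\chi_0$ with $\chi_0$ a cutoff supported in $W$ and identically $1$ near $p_0$. Next I would invoke the tubular neighbourhood theorem: there is a neighbourhood $U\supset\partial D$ and a smooth nearest-point projection $\pi\colon U\to\partial D$ whose fibre over $q\in\partial D$ is the normal segment $\{q+t\nu(q):|t|<\varepsilon\}$, with $\nu(q)=\nabla\rho(q)$ the outward unit normal (using the normalisation $|\nabla\rho|\equiv1$ on $\partial D$, together with $\rho<0$ inside $D$). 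Put $u_0=v\circ\pi$ on $U$. Since $u_0$ is constant along the fibres of $\pi$, and the fibre through $q\in\partial D$ has direction $\nu(q)=\nabla\rho(q)$, we get $\sum_i\frac{\partial\rho}{\partial x_i}(q)\,\frac{\partial u_0}{\partial x_i}(q)=\nabla u_0(q)\cdot\nu(q)=0$ for every $q\in\partial D$. Moreover $d\pi(p_0)$ is the orthogonal projection $T_{p_0}\mathbb{R}^n\to T_{p_0}\partial D$, so $du_0(p_0)=dv(p_0)\circ d\pi(p_0)$ agrees with $\xi$ on $T_{p_0}\partial D$ and vanishes on $\nabla\rho(p_0)$; since $\xi$ has these same two properties, $du_0(p_0)=\sum\xi_i dx_i$.

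Finally, choose $\chi\in C^\infty_c(U)$ with $\chi\equiv1$ on a smaller neighbourhood of $\partial D$ and set $u=\chi u_0$, extended by $0$ off $U$ (one may further multiply by a cutoff so that $u$ is compactly supported on $\mathbb{R}^n$). Then $u\in C^\infty(\mathbb{R}^n)$ and $u\equiv u_0$ near $\partial D$, so $du=du_0$ there; in particular $du(p_0)=\sum\xi_i dx_i$ and $\sum_i\frac{\partial\rho}{\partial x_i}\frac{\partial u}{\partial x_i}=0$ on $\partial D$, which is the claim. There is no genuine obstacle here; the only points that need care are the identification of $\nabla\rho|_{\partial D}$ with the unit normal --- and hence with the fibre direction of $\pi$ --- and the fact that $d\pi(p_0)$ is orthogonal projection onto $T_{p_0}\partial D$, both immediate from $\rho$ being a defining function with $|\nabla\rho|\equiv1$ on $\partial D$. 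This is the $d$-operator counterpart of Lemma~\ref{lem:exist-dbar-closed-form}; cf.\ \cite{Deng-Zhang}.
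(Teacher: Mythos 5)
Your construction is correct and is essentially the paper's own proof: both take a boundary function whose differential at $p_0$ is the tangential part of $\xi$, extend it to a neighbourhood of $\partial D$ constant in the normal direction (your $v\circ\pi$ via the nearest-point projection is exactly this extension, made explicit), and then cut off. Your version merely spells out the tubular-neighbourhood mechanism and the check that $du_0(p_0)$ matches $\xi$ in both the tangential and normal directions, details the paper leaves implicit.
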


\begin{proof}
Let \( u_1 \in C^\infty(\partial D) \) be a smooth function on \( \partial D \) such that \( \nabla u_1(p_0) = (\xi_1, \dots, \xi_n) \). We extend \( u_1 \) to a smooth function \( u_2 \) in a neighborhood \( U \) of \( \partial D \), constant in the normal direction, with \( U \) containing \( \bar{D} \).

Next, choose a cutoff function \( \chi \in C^\infty(\mathbb{R}^n) \) that is identically 1 in a small neighborhood \( V \) of \( \partial D \), and 0 outside \( V \). Let \( u = \chi u_2 \). Note that since \( u_2 \) is constant in the normal direction, we have
$$
\nabla \rho \cdot \nabla u = 0,
$$
which implies
$$
\nabla \rho \lrcorner du = 0.
$$
Moreover, we have \( du(p_0) = \xi \), completing the proof.
\end{proof}

Now we are ready to prove the main results of this section.

\begin{thm}(= Theorem \ref{thm:main-3})
	Let $D$ be a bounded domain in $\mr^n$ with smooth boundary, $ 1\leq q \leq n-1$.
Suppose that for any strictly convex function $\varphi$ of the form: 
	$$
	\varphi=a\|x-x_0\|^2-b, \forall a, b > 0, \forall x_0 \in \mr^n,
	$$ 
	and any $d$-closed form $f\in \wedge^{q}(\bar{D}) \cap \text{Dom}(d^*_{\varphi})$,
	the equation $d u=f$ is solvable on $D$ with the estimate
		$$
		\int_{D} |u|^2 e^{-\varphi} d\lambda    \leq \int_{D} \inner{\Hess_{\varphi}^{-1}f,f } e^{-\varphi }  d\lambda  =\frac{1}{2aq}\int_D|f|^2e^{-\varphi } d\lambda ,
		$$
then $D$ is $q$-convex.
\end{thm}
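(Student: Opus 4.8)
The plan is to transcribe the proof of Theorem~\ref{thm:main-2} into the real setting almost verbatim: $\bar\partial$ is replaced by $d$, $(0,q)$-forms by $q$-forms, the Levi form by the real Hessian, the Morrey--Kohn--H\"ormander identity Lemma~\ref{lem:Bochner-identity-boundary-term-dbar-operator} by its $d$-analogue Lemma~\ref{lem:Bochner-identity-boundary-term-d-operator}, the extension Lemma~\ref{lem:exist-dbar-closed-form} by Lemma~\ref{lem:exist-d-closed-form}, and the ambient real dimension $2n$ by $n$. Concretely, for any $\alpha\in\wedge^{q}(\bar D)\cap\Ker(d)\cap\Dom(d^*_\varphi)$ I write $\iinner{\alpha,f}_\varphi=\iinner{\alpha,du}_\varphi=\iinner{d^*_\varphi\alpha,u}_\varphi$, apply Cauchy--Schwarz, and insert the hypothesised bound $\|u\|_\varphi^2\leq\frac{1}{2aq}\|f\|_\varphi^2$ together with Lemma~\ref{lem:Bochner-identity-boundary-term-d-operator}. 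Since the weight $\varphi=a\|x-x_0\|^2-b$ has constant Hessian matrix $2a\,\Id$, the induced operator $\Hess_\varphi$ acts as multiplication by $2aq$ on $q$-forms, so that $\inner{\Hess_\varphi\alpha,\alpha}=2aq\,|\alpha|^2$ and $\Hess_\varphi^{-1}=\tfrac1{2aq}$; hence the substitution $\alpha=\Hess_\varphi^{-1}f=\tfrac1{2aq}f$ forces $\iinner{\alpha,f}_\varphi=\iinner{\Hess_\varphi\alpha,\alpha}_\varphi=\tfrac1{2aq}\|f\|_\varphi^2$, and after cancelling $\tfrac1{2aq}\|f\|_\varphi^2$ one is left with the key inequality
$$\|\nabla\alpha\|_\varphi^2+\int_{\partial D}\inner{\Hess_\rho\alpha,\alpha}\,e^{-\varphi}\,d\lambda\ \geq\ 0,$$
valid for every weight $\varphi$ of the prescribed form.

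Then I argue by contradiction. If $\partial D$ is not $q$-convex at some $p_0\in\partial D$, then by the real counterpart of Theorem~\ref{prop:properties-hyper-q-convex-domain} --- the equivalence of $q$-convexity with the inequality $\inner{\Hess_\rho\alpha,\alpha}\geq0$ on $\partial D$ for all $q$-forms $\alpha$ annihilated by $\nabla\rho\,\lrcorner$, together with the orthogonal-invariance of this condition (the real version of Lemma~\ref{lemma:q-subharmonic-invariant}) --- there is a $q$-form $\beta\in\wedge^{q}_{p_0}(\partial D)$ with $\inner{\Hess_\rho\beta,\beta}(p_0)=-2c<0$. After an orthogonal change of coordinates normalising $p_0$ to the origin, $-\partial/\partial x_n$ to the inner normal, and $\partial/\partial x_1,\dots,\partial/\partial x_{n-1}$ to the principal directions of the second fundamental form, one may take $\beta=dx_1\wedge\cdots\wedge dx_q$. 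Applying Lemma~\ref{lem:exist-d-closed-form} produces $\alpha_i\in\wedge^{1}(\bar D)\cap\Dom(d^*)\cap\Ker(d)$ with $\alpha_i(p_0)=dx_i$; since $\nabla\rho\,\lrcorner$ is an antiderivation, the wedge $\alpha=\alpha_1\wedge\cdots\wedge\alpha_q$ lies in $\wedge^{q}(\bar D)\cap\Dom(d^*)\cap\Ker(d)$, and by continuity $\inner{\Hess_\rho\alpha,\alpha}(x)<-c$ on $B_{r_0}\cap\partial D$ for some small $r_0>0$.

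Finally I run the scaling argument. Plugging the weights $\varphi_s(x)=s(\|x\|^2-r_0^2)$ into the key inequality and letting $s\to+\infty$, using that $e^{-\varphi_s}\to0$ off $\overline{B_{r_0}}$ and that $\|\nabla\alpha\|^2\leq M$ on $\bar D$, one gets
$$\liminf_{s\to+\infty}\Big(M\!\int_{D\cap B_{r_0}}\!e^{-\varphi_s}\,d\lambda-c\!\int_{\partial D\cap B_{r_0}}\!e^{-\varphi_s}\,d\lambda\Big)\ \geq\ 0.$$
The interior piece $D\cap B_{r_0}$ has real dimension $n$ and the boundary piece $\partial D\cap B_{r_0}$ dimension $n-1$; writing $\partial D$ near $p_0$ as a graph $x_n=g(x_1,\dots,x_{n-1})$ with $g(O)=0$ and $\nabla g(O)=0$, the restriction $\|x\|^2|_{\partial D}$ has a nondegenerate critical point of index $0$ at $p_0$, so by the Morse lemma it equals $y_1^2+\cdots+y_{n-1}^2$ in suitable coordinates; passing to polar coordinates then gives $\int_{\partial D\cap B_{r_0}}e^{-\varphi_s}\,d\lambda\geq A'r_0^{-1}\int_0^{r_0}r^{n-1}e^{-s(r^2-r_0^2)}\,dr$ and $M\int_{D\cap B_{r_0}}e^{-\varphi_s}\,d\lambda=M'\int_0^{r_0}r^{n-1}e^{-s(r^2-r_0^2)}\,dr$. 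Choosing $r_0$ so small that $cA'r_0^{-1}>M'+1$, the bracket is bounded above by $-\int_0^{r_0}r^{n-1}e^{-s(r^2-r_0^2)}\,dr$, which tends to $-\infty$ as $s\to+\infty$, contradicting the displayed $\liminf$ inequality; hence $D$ is $q$-convex. The only ingredient not literally contained in the complex proof is the real version of Theorem~\ref{prop:properties-hyper-q-convex-domain} together with its orthogonal-invariance --- the one place where I expect some (routine and classical) checking is required --- while everything else is the same argument with $2n$ replaced by $n$.
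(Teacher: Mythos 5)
Your proposal is correct and follows essentially the same route as the paper's own proof of Theorem \ref{thm:main-3}: the duality plus Cauchy--Schwarz step with the hypothesised estimate and Lemma \ref{lem:Bochner-identity-boundary-term-d-operator}, the substitution $\alpha=\Hess_\varphi^{-1}f$ to isolate the key inequality, the construction of $\alpha=\alpha_1\wedge\cdots\wedge\alpha_q$ via Lemma \ref{lem:exist-d-closed-form}, and the localization with weights $s(\|x\|^2-r_0^2)$ leading to a contradiction. The only difference is cosmetic: the paper abbreviates the final asymptotic comparison by referring back to the complex case, whereas you spell it out with $n$ in place of $2n$, and you make explicit the (routine) real analogue of Theorem \ref{prop:properties-hyper-q-convex-domain} and its orthogonal invariance that the paper uses implicitly.
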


\begin{proof}

	For any $\alpha \in \wedge^{q}(\bar{D})\cap \Ker (d) \cap \Dom (d^*)$, we have
	$$
	\begin{aligned}
	 \langle  \langle \alpha ,f \rangle   \rangle_{\varphi}^2 &= \langle  \langle \alpha ,d u \rangle   \rangle_{\varphi}^2=\langle   \langle   d_{\varphi}^{*}\alpha ,u \rangle  \rangle_{\varphi}^2 \\
	&\leq \langle   \langle  d_{\varphi}^{*}\alpha ,d_{\varphi}^{*}\alpha \rangle  \rangle_{\varphi}  \cdot  \langle   \langle   u ,u \rangle  \rangle_{\varphi}.
	\end{aligned}
	$$

	By the estimate of $\|u \|_{\varphi}^2$ and Lemma \ref{lem:Bochner-identity-boundary-term-d-operator}, we get
$$
\begin{aligned}
\langle   \langle  \alpha ,f \rangle  \rangle_{\varphi} ^2 \leq&  \frac{1}{2aq}\|f\|^2_{\varphi}\cdot\\
&\left(\| \overline{\nabla} \alpha  \|^2_{\varphi}+
\iinner{\Hess_{\varphi} \alpha, \alpha}_{\varphi}
+\int_{\pa D} \inner{\Hess_{\rho} \alpha, \alpha}e^{-\varphi} dV  \right)
\end{aligned}
$$

We now relate $\alpha$ and $f$ be setting
$$
\alpha=\Hess^{-1}_{\varphi} f
$$
then we get
\begin{equation}\label{eq:condition for q-form-local in proof-2}
	\| \nabla \alpha  \|^2_{\varphi}+
\int_{\pa D} \inner{\text{Hess}_{\rho} \alpha, \alpha}e^{-\varphi} dV \geq0.
\end{equation}

Suppose \( D \) has at least one non-\( q \)-convex boundary point \( p_0 \in \partial D \), 
by definition,
there exists a \( q \)-form \( \xi \in \wedge^{q}T^*_{p_0}(\partial D) \) at \( p_0 \) and a constant $c>0$, such that
$$
\langle\text{Hess}_{\rho}\xi, \xi\rangle <-2c<0.
$$
By an affine coordinate transformation, choose the coordinates \( \left\{ x_1, \dots, x_n \right\} \) such that:

\begin{enumerate}
	\item \( p_0 \) is the origin \( O = (0, \dots, 0) \).
	\item \( -\frac{\partial}{\partial x_n} \) is the inward normal vector at \( p_0 \).
	\item \( \left\{ \frac{\partial}{\partial x_1}, \dots, \frac{\partial}{\partial x_{n-1}} \right\} \) are the eigenvectors of the second fundamental form of \( \partial D \) restricted to \( T_{p_0}(\partial D) \), with corresponding eigenvalues
    $$
    t_1 \leq \dots \leq t_{n-1}.
    $$
\end{enumerate}

In this coordinate, 
the \( q \)-form \( \xi = dx_1 \wedge \dots \wedge dx_q \in \wedge^q_{p_0}(\partial D) \) satisfying
$$
\langle\text{Hess}_{\rho}\xi ,\xi\rangle = \sum_{i=1}^q t_i \|\xi\|^2 <-2c<0 \text{ for some } \xi.
$$
By Lemma \ref{lem:exist-d-closed-form}, for any \( 1 \leq i \leq q \), we take \( \alpha_i \in \wedge^1(\bar{D}) \cap \text{Dom}(d^*) \cap \Ker(d) \) such that \( \alpha_i(p_0) = dx_i \). Since
$$
\nabla \rho \lrcorner(\alpha_1 \wedge \cdots \wedge \alpha_q) = \sum_{i=1}^q (\alpha_1 \wedge \cdots \wedge \nabla \rho \lrcorner\alpha_i \wedge \cdots \wedge \alpha_q) = 0,
$$
there exists a closed \( q \)-form \( \alpha = \alpha_1 \wedge \cdots \wedge \alpha_q \in \wedge^q(\bar{D}) \cap \text{Dom}(d^*) \cap \Ker(d) \) and a constant \( r_0 > 0 \), such that
$$
\langle \text{Hess}_{\rho} \alpha, \alpha \rangle(x) <-c< 0
$$
for all \( x \in B_{r_0} \cap \partial D \), where \( B_{r_0} = \{ x \in \mathbb{R}^n \mid \|x - x_0\| < r_0 \} \).

For any \( s > 0 \), define
$$
\psi_s(x) = s(\|x\|^2 - r_0^2).
$$
Substituting \( \psi_s \) into \eqref{eq:condition for q-form-local in proof-2} in place of \( \psi \), we have
\begin{equation}\label{eq:condition for q-form scaling-local in proof}
\int_D \|\nabla \alpha\|^2 e^{-\psi_s} d\lambda + \int_{\partial D} \langle \text{Hess}_{\rho} \alpha, \alpha \rangle e^{-\psi_s} dV\geq 0,
\end{equation}
for all \( s > 0 \).

Notice that for \( x \in \mathbb{R}^n \setminus \overline{B_{r_0}} \), as \( s \to +\infty \), we have
$$
e^{-\psi_s(x)} \to 0,
$$
so
$$
\liminf_{s \to +\infty} \left( \int_{D \cap B_{r_0}} \|\nabla \alpha\|^2 e^{-\psi_s} d\lambda + \int_{\partial D \cap B_{r_0}} \langle \text{Hess}_{\rho} \alpha, \alpha \rangle e^{-\psi_s} dV \right) \geq 0.
$$
Since \( D \) is bounded, there exists a constant \( M > 0 \) such that
$$
\|\nabla \alpha(x)\|^2 \leq M
$$
for all \( x \in \bar{D} \). Therefore, we obtain
\begin{equation}\label{eq:key inequality-local in proof-2}
\liminf_{s \to +\infty} \left( M \int_{D \cap B_{r_0}} e^{-\psi_s} d\lambda - c \int_{\partial D \cap B_{r_0}} e^{-\psi_s} dV \right) \geq 0,
\end{equation}
by the same reasoning as in \eqref{eq:key inequality-local in proof}, as $s \to +\infty$, the left-hand side of the above tends to $-\infty$, leading to a contradiction.
\end{proof}

\end{document}